\newtheorem{theorem}{Theorem}
\newtheorem{lemma}{Lemma}
\newtheorem{corollary}{Corollary}
\newtheorem{proposition}{Proposition}
\newtheorem{remark} {Remark}
\newtheorem{problem} {Problem}
\newtheorem{conjecture}{Conjecture}
\newcommand{\al}{\alpha}
\newcommand {\bC} {\mathbb {C}}
\newcommand {\bR} {\mathbb R}
\newcommand {\Ga} {\Gamma}
\begin{document}
             \numberwithin{equation}{section}

             \title[Univalent disks of rational functions and Hermite-Biehler polynomials]
             {Maximal univalent disks of real rational functions and  Hermite-Biehler  polynomials}

\author[V.~Kostov]{Vladimir P. Kostov}
\address{Universit\'e de Nice, Laboratoire de Math\'ematiques,
Parc Valrose, 06108 Nice Cedex 2, France, {\it kostov@math.unice.fr}}

\author[B.~Shapiro]{Boris Shapiro}
\address{Department of Mathematics, Stockholm University, SE-106 91, Stockholm,
            Sweden, {\it shapiro@math.su.se}}

\author [M.~Tyaglov]{Mikhail Tyaglov}
\address{Institut f\"ur Mathematik, MA 4-5
Technische Universit\"at Berlin,
D-10623 Berlin, Germany, {\it tyaglov@math.tu-berlin.de}}

\date{\today}
\keywords{Hermite-Biehler theorem, root localization}
\subjclass[2000]{Primary 26C05, Secondary 30C15}

\begin{abstract} The well-known Hermite-Biehler theorem claims that a univariate monic polynomial $s$ of degree $k$ has all roots in the open upper half-plane if and only if  $s=p+iq$ where  $p$ and $q$ are real polynomials of degree $k$ and $k-1$ resp. with all real, simple and interlacing roots,  and $q$ has a negative leading coefficient.  Considering roots of $p$ as cyclically ordered on $\bR P^1$  we show that the open disk in $\bC P^1$ having a pair of consecutive roots of $p$ as its diameter is the maximal univalent disk for the function $R=\frac{q}{p}$.   This solves a special case of the so-called Hermite-Biehler problem.

\end{abstract}

\maketitle


\section{Introduction} \label{sec:int}  Rational functions of the form
\begin{equation}\label{eq:main}
R=\frac{q}{p},
\end{equation}³
where $p$ and $q$ have all real, simple and interlacing roots appear often in different areas of mathematics and enjoy a number of nice properties. Below we discuss  a (hopefully) new property of such $R$ related to the classical Hermite-Biehler theorem.  Namely, we start with  the following question.

\begin{problem}
For a given real rational function $F$ describe  real (i.e. invariant under the complex  conjugation) disks in $\bC P^1$ in which $F$ is univalent.
\end{problem}³

An obvious necessary condition for a real disk $D\subset \bC P^1$ to be a univalent domain of $F$ is that $F$ restricted to the real interval $D\cap \bR P^1$ is univalent.
The main result of this note  is that for rational functions of the form \eqref{eq:main} this simple necessary condition is, in fact, sufficient, see Theorem~\ref{th:main}.
It  has an immediate implication for the location of roots of  Hermite-Biehler polynomials, i.e. polynomials whose roots belong  to the open upper half-plane.  Denoting the monic polynomial proportional to the latter one by $s$ we get by the Hermite-Biehler theorem that $s=p+iq$ where $p$ and $q$ have all real, simple and  interlacing roots and different signs of their leading coefficients.  Since  the roots of  $s$ solve the equation $p+iq=0$ or, equivalently, $\frac{q}{p}=i$  we can apply  Theorem~\ref{th:main} to restrict the location of roots of $s$.  Additionally, our result gives a partial answer to the following  general Hermite-Biehler problem  as formulated in \cite{Fi}, p.~575.\footnote{When preparing this paper we received a sad news that  Professor S.~Fisk passed away after a long period of   illness.}

\medskip
\noindent
{\bf Question.}³ Given a pair of real polynomials $(p,q)$ give restrictions on the location of the roots of $p+iq$ in terms of the location of the roots of $p$ and $q$.

\medskip In what follows we will assume for simplicity that $p$ and $q$ are monic polynomials which does not affect any of the results below.
We start with a certain localization result for the critical points of $R$ given by  \eqref{eq:main}.
For a given  pair $(p,q)$ of polynomials define its {\it  Wronskian}  as
$$W(p,q)=p'q-q'p.$$
 Properties of the Wronski map were studied in details in e.g. \cite{EG1} and \cite{EG2}.
 Notice if $p$ and $q$ are coprime then  roots of $W(p,q)$ are exactly the critical points of $R=\frac{q}{p}$. (Here $R$ is considered as a map of $\bC P^1$ to itself.) In the  case  when $p$ and $q$ have all  real, simple  and interlacing roots  one can easily see that all roots of $W(p,q)$ are non-real.  (The latter fact  was already known  to C.~F.~Gau\ss{}.)

Our first result concerns the location of these roots.  Assume that $p$ and $q$ are real polynomials of degrees $k$ and $k-1$ resp. with all real, simple and interlacing roots and  let $p_1<p_2<...<p_{k}$ be the roots of $p$. Denote by $D_j, \;j=1,...,k-1$ the open disk bounded by the circle $C_j\subset \bC $ having the pair $(p_j,p_{j+1})$ as its diameter and denote by $D_0$ the open disk  bounded by the circle $C_0\subset \bC $ having the pair $(p_1,p_{k})$ as its diameter.  Set $\Omega_p=\bar D_0\setminus \bigcup_{j=1}^{k-1}D_j$ where $\bar D_0$ is the closure of $D_0$, see Fig.~1. Our first result is as follows.

\begin{theorem} \label{th:wron}  Under the above assumptions all roots of $W(p,q)$ lie in $\Omega_p$.
\end{theorem}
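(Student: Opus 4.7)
The plan is to reformulate the equation $W(p,q)(z_0)=0$ as a balance of complex unit vectors and extract the geometric conclusion from a convex-hull argument on the circle. The starting point is the partial fraction decomposition $R(z)=\sum_{j=1}^{k} A_j/(z-p_j)$ with $A_j=q(p_j)/p'(p_j)$; since the roots of $p$ are simple and the roots of $q$ interlace them, the signs of $p'(p_j)$ and of $q(p_j)$ alternate in the same way as $j$ grows, so the residues $A_j$ all share one sign, which we may take to be positive after a harmless global sign change. Since $p$ and $q$ are coprime, the roots $z_0$ of $W(p,q)=-p^{2}R'$ are exactly the zeros of $\sum_{j=1}^{k} A_j/(z-p_j)^{2}$; all such $z_0$ are non-real by the Gauss remark quoted above, and by the real symmetry of $p,q$ it suffices to treat the case $\operatorname{Im} z_0>0$.

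Writing $r_j=A_j/|z_0-p_j|^{2}>0$ and $\phi_j=-2\arg(z_0-p_j)$, the equation $R'(z_0)=0$ reads $\sum_{j=1}^{k} r_j\, e^{i\phi_j}=0$. Because $z_0$ lies in the open upper half-plane and $p_1<\cdots<p_k$, each $\arg(z_0-p_j)\in(0,\pi)$ is strictly increasing in $j$, so $\phi_1>\phi_2>\cdots>\phi_k$ inside an arc of length $<2\pi$. The geometric dictionary is
\[
\angle(p_j\,z_0\,p_{j+1})=\arg(z_0-p_{j+1})-\arg(z_0-p_j)=\tfrac12(\phi_j-\phi_{j+1}),\qquad \angle(p_1\,z_0\,p_k)=\tfrac12(\phi_1-\phi_k),
\]
and by Thales' theorem a point $z$ lies in the open disk with diameter $[a,b]\subset\bR$ iff $\angle(a\,z\,b)>\pi/2$.

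The key analytic input is the elementary fact that a positive linear combination $\sum r_j e^{i\phi_j}=0$ of distinct unit vectors forces the origin to lie in the relative interior of $\mathrm{conv}\{e^{i\phi_j}\}$, which in turn forces every arc-gap between consecutive $\phi_j$'s on the unit circle to be $<\pi$ (the degenerate alternative, that the directions reduce to two antipodal rays, occurs only when $k=2$). Applied to the cyclically ordered $\phi_1>\cdots>\phi_k$, this gives simultaneously $\phi_j-\phi_{j+1}<\pi$ for $j=1,\ldots,k-1$ and the wrap-around bound $2\pi-(\phi_1-\phi_k)<\pi$. Translating via the dictionary above, these become $\angle(p_j\,z_0\,p_{j+1})<\pi/2$ for each $j$ and $\angle(p_1\,z_0\,p_k)>\pi/2$, i.e., $z_0\in D_0$ and $z_0\notin\bar D_j$ for all $j=1,\ldots,k-1$, whence $z_0\in\Omega_p$. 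In the boundary case $k=2$ the antipodal alternative forces $\phi_1-\phi_2=\pi$, placing $z_0$ on $\partial D_0=\partial D_1$, which is still contained in $\Omega_p$.

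The main hurdle I expect is the convex-hull step together with the careful treatment of gap-equality boundary cases: one must rule out $\phi_j-\phi_{j+1}=\pi$ whenever $k\geq 3$, which rests on the observation that a vanishing positive combination of unit vectors collinear with a single diameter cannot accommodate any vector off that diameter. The residue-sign computation and the angle-to-disk translation are routine once that analytic input is in place.
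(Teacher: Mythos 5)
Your proof is correct, and it reaches the conclusion by a genuinely different organization of the same analytic engine. Both arguments rest on the partial fraction expansion $R=\sum_j \alpha_j/(z-p_j)$ with $\alpha_j>0$, the angle doubling $\arg(z-p_j)^{-2}=-2\theta_j$, and a separation argument showing that a vanishing positive combination of the unit vectors $e^{i\phi_j}$ cannot be confined to a closed half-plane. The paper, however, only extracts from this the statement that the roots of $W(p,q)$ lie in $\bar D_0$ (its Lemma~\ref{lem.1}, via the single estimate $\theta_k-\theta_1<\pi/2$ outside $C_0$), and then obtains the exclusion from each inner disk $D_j$ by a second step: the M\"obius covariance $W(ap+bq,cp+dq)=(ad-bc)^{-1}W(p,q)$ together with the conformal invariance of ``disk with real diameter $[a,b]$'', which lets one move any consecutive pair $(p_j,p_{j+1})$ into the extreme position and reapply the lemma. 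You instead read off \emph{all} $k$ cyclic gap conditions at once from the convex-hull statement: each finite gap $\phi_j-\phi_{j+1}<\pi$ gives $z_0\notin\bar D_j$ by Thales, and the wrap-around gap gives $z_0\in D_0$. This buys you two things: you never need the M\"obius reduction (nor the verification that real M\"obius maps send diameter-disks to diameter-disks), and you get a marginally sharper conclusion for $k\ge 3$ (exclusion from the closed disks $\bar D_j$ and containment in the open disk $D_0$, rather than just $\Omega_p$), with the $k=2$ antipodal degeneration correctly identified as the only case where a root can sit on a circle $C_j$. The paper's route, in exchange, isolates a single clean lemma whose M\"obius-invariant formulation is reused later in the text (e.g.\ for the corollary about $\bigcap_\al\Omega_p(\al)$). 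Your residue-sign computation and the treatment of the equality case $\phi_j-\phi_{j+1}=\pi$ are both sound, so I see no gap.
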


 Similar statement for the case $q=p'$ can be found in \cite{DS}. Further, note that for any real $\alpha$, the roots of the polynomial $p+\alpha q$ are real and interlacing the roots  of~$p$.
 Denoting by $p_j(\alpha)$ the roots of the polynomial $p+\alpha q$ consider the open disks $D_j(\al),\;j=1,...,k-1$ having the intervals $(p_j(\al),p_{j+1}(\al))$ as their diameters and denote by $D_0(\al)$ the open disk having $(p_1(\al),p_k(\al))$ as its diameter. Finally, let $\Omega_p(\al)=\bar D_0(\al)\setminus \bigcup_{j=1}^{k-1}D_j(\al).$  Since for any real $\al$ one has $W(p+\al q,q)=W(p,q)$, Theorem~\ref{th:wron}³implies the following.

\begin{corollary}³ All roots of $W(p,q)$ lie in  $\bigcap_{\al\in \bR}\Omega_p(\al)$.
\end{corollary}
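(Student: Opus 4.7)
The plan is to reduce the Corollary directly to Theorem~\ref{th:wron} applied not to the original pair $(p,q)$ but to each perturbed pair $(p+\alpha q,\,q)$, $\alpha\in\bR$, exploiting the fact, pointed out just before the statement, that the Wronskian is invariant under this family of shears.

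First I would record the one-line identity
$$W(p+\alpha q,\,q)=(p+\alpha q)'q-q'(p+\alpha q)=p'q-q'p=W(p,q),$$
so that the $k-2$ roots of $W(p,q)$ are exactly the critical points of the rational function $R_\alpha=q/(p+\alpha q)$ for every real $\alpha$ for which $p+\alpha q$ and $q$ remain coprime (which is automatic once the interlacing property below is verified, since a common root would contradict interlacing).

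The second step is to verify that for every $\alpha\in\bR$ the pair $(p+\alpha q,\,q)$ again satisfies the hypotheses of Theorem~\ref{th:wron}: real polynomials of degrees $k$ and $k-1$ with simple real interlacing roots. The degrees are unchanged, and the interlacing property is classical: since $p$ and $q$ interlace, the real meromorphic function $q/p$ is strictly monotone on each component of its domain and takes every real value exactly $k-1$ or $k$ times with simple preimages; consequently the level set $\{q/p=-1/\alpha\}$, i.e.\ the root set of $p+\alpha q$, consists of $k$ distinct real points, one in each of the intervals determined by the roots of $q$ (with the two outer ones being the two unbounded branches). Thus $p+\alpha q$ and $q$ interlace, and the roots $p_1(\alpha)<\cdots<p_k(\alpha)$ of $p+\alpha q$ are exactly those appearing in the definition of $\Omega_p(\alpha)$.

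With both ingredients in place the conclusion is immediate. Theorem~\ref{th:wron} applied to the pair $(p+\alpha q,\,q)$ asserts that every root of $W(p+\alpha q,\,q)$ lies in the set $\Omega_{p+\alpha q}$, which by definition equals $\Omega_p(\alpha)$. Combining this with the Wronskian identity from step one we obtain that every root of $W(p,q)$ lies in $\Omega_p(\alpha)$; since $\alpha\in\bR$ was arbitrary, the roots of $W(p,q)$ lie in $\bigcap_{\alpha\in\bR}\Omega_p(\alpha)$, as claimed. The only genuinely nontrivial step is the interlacing verification in the second paragraph, but this is a standard consequence of the sign-alternation behaviour of $q/p$ on $\bR P^1$, so the corollary is essentially a formal consequence of Theorem~\ref{th:wron} together with the invariance of the Wronskian under $p\mapsto p+\alpha q$.
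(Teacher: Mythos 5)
Your argument is exactly the paper's: the authors prove the corollary in one sentence before its statement, by noting that $W(p+\al q,q)=W(p,q)$ and that $p+\al q$ and $q$ again have real, simple, interlacing roots, so Theorem~\ref{th:wron} applies for every $\al$; you merely spell out the interlacing verification that they take for granted. One trivial slip: $W(p,q)$ has $2k-2$ roots (the critical points of the degree-$k$ map $R$), not $k-2$, but this count plays no role in the argument.
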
³


 Since in our case all roots of $W(p,q)$ are non-real it is natural to ask if every real polynomial of  even degree with all non-real roots can be represented as the Wronskian of a suitable pair $(p,q)$ as above.  
 Consider the space $Int_k$ of all pairs $(p,q)$ where $p=z^{k}+a_1z^{k-2}+...+a_{k-1}$, $q=z^{k-1}+b_1z^{k-2}+...+b_{k-1}$ with all real, simple and interlacing roots. The next statement  follows from the main result of \cite{Za} which was conjectured by J.~Milnor.

\begin{proposition}\label{pr:wronsk} The Wronski map $W$ defines a diffeomorphism of the space $Int_{k-1}$ and the space
 $Pol_{2k-2}$ consisting of polynomials $u(z)=z^{2k-2}+c_1z^{2k-3}+...+c_{2k-2}$ which are strictly positive on the whole real axis $\bR$, i.e. it is a bijective map between $Int_{k-1}$ and $Pol_{2k-2}$ (with everywhere non-vanishing Jacobian).
\end{proposition}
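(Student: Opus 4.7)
The plan is, after confirming that $W$ indeed lands in $Pol_{2k-2}$, to invoke \cite{Za} for the bijectivity and for the non-vanishing of the Jacobian, which together constitute Milnor's conjecture in the interlacing setting. Only the image-containment step really needs input from the present paper, via Theorem~\ref{th:wron}.

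First I would verify image containment. A direct leading-coefficient computation shows that $W(p,q)=p'q-q'p$ is monic of the expected degree, because the leading coefficients of $p'q$ and $q'p$ differ by exactly $1$. Since interlacing forbids common zeros, $p$ and $q$ are coprime; since $p$ has only simple roots, the poles of $R=q/p\colon\bCP\to\bCP$ are simple, and hence they are not ramification points of $R$. Consequently every zero of $W(p,q)$ is a genuine critical point of $R$, so Theorem~\ref{th:wron} confines all of them to $\Omega_p$. But $\Omega_p\cap\bR$ is the finite set $\{p_1,\ldots,p_k\}$, which we have just excluded from being critical, so $W(p,q)$ has no real zeros at all. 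A monic polynomial of even degree with no real zeros is strictly positive on~$\bR$, placing $W(p,q)$ in $Pol_{2k-2}$.

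Next I would quote the main theorem of \cite{Za} (resolving the conjecture of J.~Milnor), which asserts precisely that $W\colon Int_{k-1}\to Pol_{2k-2}$ is a bijection with nowhere-vanishing Jacobian. Since both spaces are naturally smooth manifolds of the same dimension and $W$ is polynomial in the coefficients, these two properties promote the set-theoretic bijection to a smooth diffeomorphism, as claimed. A short sanity check is that the differential $dW_{(p,q)}(\dot p,\dot q)=W(\dot p,q)+W(p,\dot q)$ is a linear map between real vector spaces of the same dimension, so non-vanishing Jacobian is equivalent to pointwise injectivity of this differential; in view of the global bijection, failure of injectivity on a tangent space would give a non-trivial kernel inconsistent with the invariance-of-domain picture.

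The main conceptual obstacle, by far, is Milnor's conjecture itself, namely surjectivity: given a strictly positive polynomial of degree $2k-2$, exhibiting an interlacing pair $(p,q)$ whose Wronskian equals it. Once that deep input is imported from \cite{Za}, the present proposition reduces to the image-containment argument above (which uses Theorem~\ref{th:wron} in a genuine way) together with a routine application of the inverse function theorem.
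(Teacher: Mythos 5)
Your image-containment step is fine (and in fact more explicit than the paper, which only records in the introduction that all roots of $W(p,q)$ are non-real), but the central step has a genuine gap: the main theorem of \cite{Za} does \emph{not} ``assert precisely that $W\colon Int_{k-1}\to Pol_{2k-2}$ is a bijection with nowhere-vanishing Jacobian.'' What Zakeri proves (Theorem~\ref{th:za} in the paper) is an existence-and-uniqueness statement for proper holomorphic self-maps of the unit disk of degree $d+1$ with $d$ prescribed critical points, normalized by $f(0)=0$, $f(1)=1$. The entire content of the paper's proof is the translation between that statement and the Wronski-map statement, and your proposal omits it. Concretely, one must (i) identify interlacing pairs $(p,q)$ with real rational maps $R=q/p$ sending the upper half-plane to itself, hence, after a M\"obius change of variable, with proper self-maps of the disk; (ii) identify a monic real polynomial $u\in Pol_{2k-2}$ with the conjugation-invariant multiset of its roots, i.e.\ with $k-1$ freely prescribed points in the upper half-plane, and check that these are exactly the critical points of $R$ (your coprimality/simple-pole remark does part of this); and (iii) show that the normalizations defining $Int_k$ (both polynomials monic, $p$ with vanishing subleading coefficient) cut the $\mathrm{SL}(2,\bR)$-orbit $(p,q)\mapsto(ap+bq,cp+dq)$, which by \eqref{shifted.Wronskian} preserves the Wronskian when $ad-bc=1$, down to a single representative, matching Zakeri's normalization. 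Step (iii) is where uniqueness in the proposition actually comes from, and it is invisible in your write-up because you have attributed the finished statement to the reference.

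A secondary flaw: your ``sanity check'' for the non-vanishing of the Jacobian is not a proof. A smooth global bijection can perfectly well have a degenerate differential at some point ($x\mapsto x^3$ on $\bR$), so ``invariance of domain'' does not upgrade bijectivity to a diffeomorphism; injectivity of $dW_{(p,q)}$ needs its own argument (or must itself be extracted from the analytic structure of Zakeri's proof). The paper is admittedly terse on this point as well, but your proposed justification is actually invalid rather than merely omitted.
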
³

\begin{figure}

\begin{center}
\includegraphics[scale=0.35]{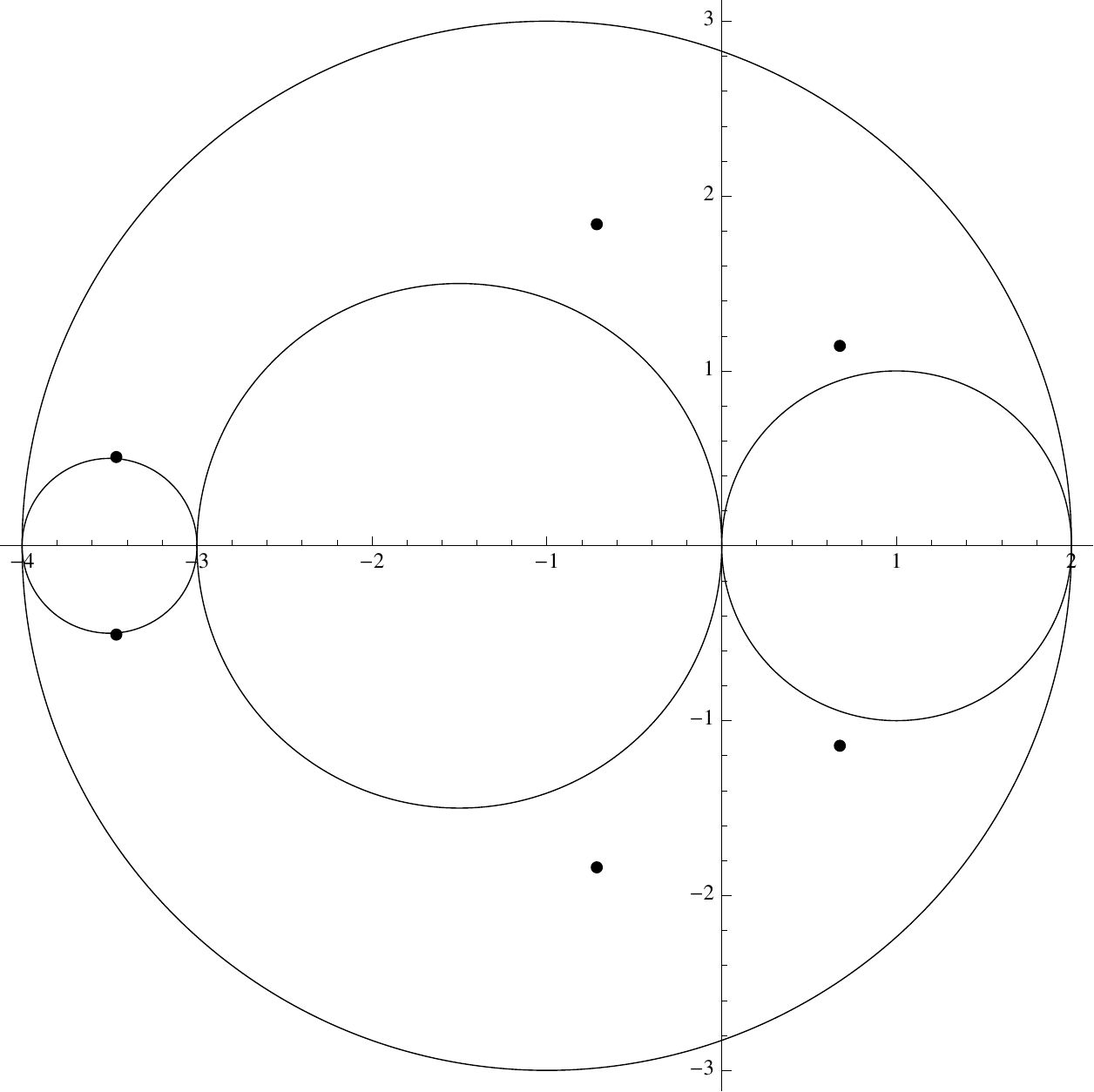} \includegraphics[scale=0.35]{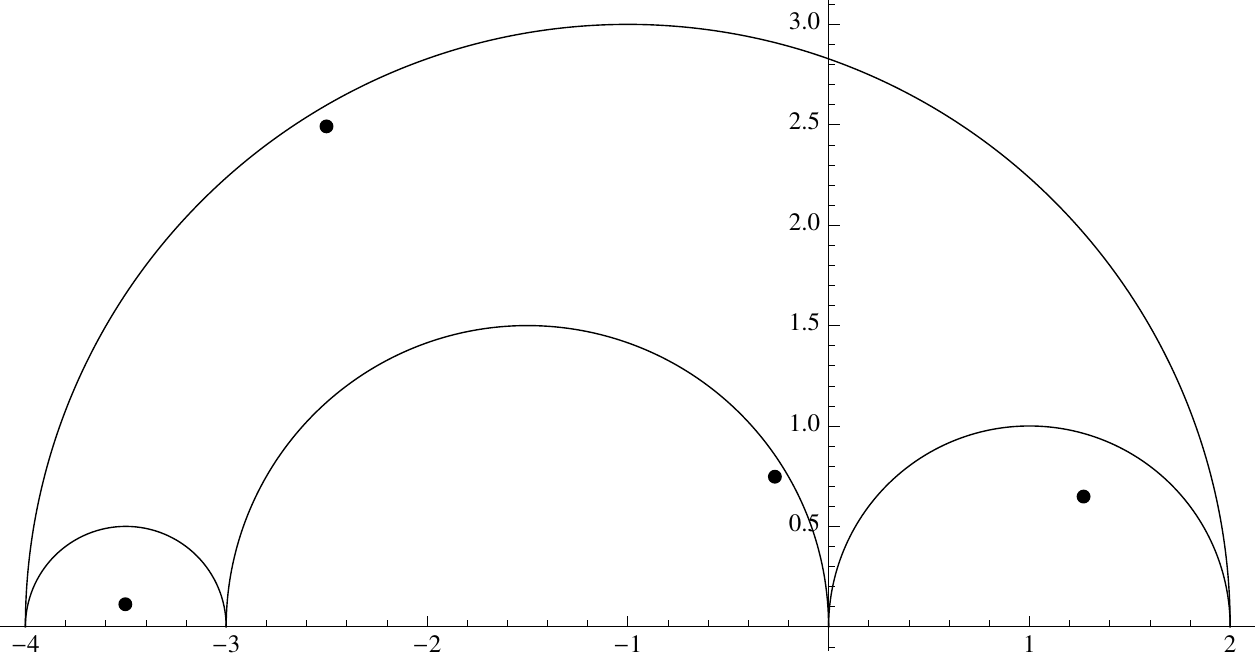}
\end{center}

\caption{Critical points of $R=\frac{q}{p}$ and roots of $s=p-4 iq$ for  $p=(z+4)(z+3)z(z-2)$ and $q=(z+7/2)(z+1)(z-1)$.}
\label{fig1}
\end{figure}

Finally, the main result of this note is as follows. Take $R=\frac{q}{p}$ as in \eqref{eq:main}.

\begin{theorem}\label{th:main} In the above notation  the open disk $D_j(\al)$ is a maximal real univalent disk of $R$.
\end{theorem}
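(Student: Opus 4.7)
The plan is to first reduce to the case $\alpha = 0$, then establish maximality and univalence separately. For the reduction: the function $\widetilde R := q/(p+\alpha q) = R/(1+\alpha R)$ is a Möbius transform of $R$, so $R$ is univalent on a set if and only if $\widetilde R$ is. The pair $(p+\alpha q,\, q)$ still has real, simple, interlacing roots (since $W(p+\alpha q, q) = W(p,q)$ has no real roots), and by definition $D_j(\alpha)$ is the disk with diameter between consecutive roots of $p+\alpha q$; hence the theorem for $(p,q,\alpha)$ reduces to the same theorem for $(p+\alpha q,\, q,\, 0)$. From now on we assume $\alpha = 0$ and write $D_j$ for the disk with diameter $(p_j, p_{j+1})$ between two consecutive roots of $p$. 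Maximality is easy: any real disk strictly containing $D_j$ contains a neighbourhood of $p_j$ or $p_{j+1}$ on both sides, and since $R$ takes every real value going from $\pm\infty$ to $\mp\infty$ on each interval between consecutive poles, two distinct points on opposite sides of such a pole map to the same value, violating univalence.

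For univalence on $D_j$, suppose $R(z_1) = R(z_2) = c$ for distinct $z_1, z_2 \in D_j$. If $c \in \bR \cup \{\infty\}$, then $q - cp$ has only real simple roots interlacing those of $p$ (by Hermite--Biehler), so at most one root lies in $(p_j, p_{j+1}) = D_j \cap \bR$, a contradiction. If $c = a + ib$ with $b \ne 0$, write $q - cp = (q-ap) + i(-bp)$: this is a Hermite--Biehler polynomial (up to the sign of the leading coefficient of the imaginary part), so all its roots lie in one open half-plane. By complex conjugation we may therefore assume $z_1, z_2$ both lie in $D_j \cap \{\mathrm{Im}\,z > 0\}$.

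The main step is this last case. Apply the real Möbius transformation $M(z) = (z - p_{j+1})/(z - p_j)$, which preserves the upper half-plane and sends $D_j$ onto the open left half-plane (since $M(p_j) = \infty$, $M(p_{j+1}) = 0$, and the midpoint of $D_j$ maps to $-1$). Using the Herglotz partial fraction $R(z) = \sum_m a_m/(z - p_m)$ with $a_m = q(p_m)/p'(p_m) > 0$, a direct computation yields
\[
\widetilde R(w) := R(M^{-1}(w)) = -A w + C + \frac{\gamma_0}{w} + \sum_{m \ne j, j+1} \frac{\gamma_m}{w - \pi_m},
\]
where $A = a_j/(p_{j+1}-p_j) > 0$, $\gamma_0 = a_{j+1}/(p_{j+1}-p_j) > 0$, $\gamma_m > 0$, $C \in \bR$, and $\pi_m = (p_m - p_{j+1})/(p_m - p_j) > 0$. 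The positivity of each $\pi_m$ uses crucially that $p_m \notin [p_j, p_{j+1}]$ for $m \ne j, j+1$, which is where the specific geometry of $D_j$ enters; thus all finite poles of $\widetilde R$ lie in the closed right half-plane, and $\widetilde R$ is holomorphic on the open left half-plane. If now $\widetilde R(w_1) = \widetilde R(w_2)$ for distinct $w_1, w_2$ in the open second quadrant $M(D_j \cap \{\mathrm{Im}\,z > 0\})$, then computing the divided difference from the expansion above gives
\[
A + \sum_{l} \frac{\gamma_l}{(w_1 - \pi_l)(w_2 - \pi_l)} = 0.
\]
Since each $\pi_l \ge 0$, each $w_i - \pi_l$ remains in the open second quadrant; and for any two second-quadrant numbers $\zeta_1, \zeta_2$ one has $\mathrm{Im}(\zeta_1 \zeta_2) = \mathrm{Re}(\zeta_1)\mathrm{Im}(\zeta_2) + \mathrm{Re}(\zeta_2)\mathrm{Im}(\zeta_1) < 0$, so $\mathrm{Im}(1/(\zeta_1 \zeta_2)) > 0$. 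Hence every term of the sum has strictly positive imaginary part while $A$ is real, a contradiction. The conjugate case $D_j \cap \{\mathrm{Im}\,z < 0\}$ is symmetric. The main obstacle will be establishing the partial-fraction form above with $A, \gamma_l > 0$ and $\pi_l \ge 0$: the positivity of the $\pi_l$ relies crucially on there being no roots of $p$ strictly between $p_j$ and $p_{j+1}$, and is exactly what turns the sign of the imaginary parts into the needed contradiction.
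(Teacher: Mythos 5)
Your proof is correct, but it proceeds by a genuinely different route than the paper. The paper argues via the boundary: it shows that $R(C_j^+)$ is a simple loop meeting $\bR P^1$ only at $\infty$, the key technical input being Lemma~\ref{lm:final} (after normalizing $p_j=-1$, $p_{j+1}=1$, the function $\Re R$ is strictly monotone along $C_j^+$ because each summand $\al_l(x-p_l)/((x-p_l)^2+1-x^2)$ is monotone), and then concludes that each half-disk $D_j^{\pm}$ maps bijectively onto the Jordan domain cut out by that loop; the case $\deg R=2$ needs separate treatment there. You instead give a direct two-point argument: after disposing of real values $c$ via interlacing and of the half-plane dichotomy via Hermite--Biehler, you send $D_j$ to the left half-plane by $w=(z-p_{j+1})/(z-p_j)$, observe that the Herglotz expansion transforms into $-Aw+C+\sum_l\gamma_l/(w-\pi_l)$ with $A,\gamma_l>0$ and all poles $\pi_l\ge 0$ (precisely because no $p_m$ lies in $(p_j,p_{j+1})$), and then kill the divided difference by the sign of $\Im\bigl(1/((w_1-\pi_l)(w_2-\pi_l))\bigr)$ for $w_1,w_2$ in the open second quadrant. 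I verified the computation of $A$, $\gamma_0$, $\gamma_m$ and $\pi_m$; all signs are as you claim, and the divided-difference identity is correct. Your approach is shorter, needs no normalization to $j=1$, and treats $\deg R=2$ uniformly; the paper's approach is longer but yields extra geometric byproducts (the simple-loop structure of $R(C_j^+)$, which feeds its final corollary on the minimum $m$ versus critical values and its concluding conjecture on convexity of that loop). One small point of phrasing: in the maximality step, two points immediately on opposite sides of a pole take large values of opposite sign, so the repeated value is not found ``across the pole'' locally; rather, any value attained on the extra real arc beyond $p_j$ or $p_{j+1}$ is attained again somewhere in $(p_j,p_{j+1})$ because $R$ maps that interval onto all of $\bR$ --- which is exactly the fact you cite, so the argument stands as written.
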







Theory of univalent functions is a very classical domain closely related to the theory  of quasiconformal mappings and Teichm\"uller spaces, see e.g. \cite {Leh}.  One of the central results often called the criterion of univalency of functions in a disk was proved by Z.~Nehari in 1949. It  claims that if the quotient of the absolute value of the Schwarzian derivative of $F$ by the square of the appropriate hyperbolic metrics in a  disk is at most $2$, then $F$ is univalent.  This result is also best possible within  the whole class of holomorphic  functions as shown by E.~Hille but it does not have to be sharp for individual functions. It is very tempting to understand the relation between Nehari's criterion and the above Theorem~\ref{th:main}.

\medskip
\noindent
{\it Acknowledgements.}³ The first and the third authors want to acknowledge the hospitality    of   the  Mathematics department of     Stockholm University in October 2009 and January 2010. The authors want to thank  A.~Eremenko  for relevant explanations of the main result of \cite{Za} and P.~Br\"anden for important discussions of the topic.

\section{Proofs}\label{sec:pr}

We first prove Theorem~\ref{th:wron}.
\begin{proof}
Denote by  $p_1<q_1<p_2<q_2<...<q_{k-1}<p_k$  the roots of $p$ and $q$ respectively.  Since for any non-degenerate matrix
$\begin{pmatrix} a & b\\ c& d \end{pmatrix}$ one has
\begin{equation}\label{shifted.Wronskian}
W(p,q)=(ad-bc)\cdot W(ap+bq,cp+dq),
\end{equation}
 we can w.l.o.g. assume that $p=\prod_{j=1}^k(z-p_j)$ and $q=\prod_{j=1}^{k-1}(z-q_j)$.

Notice that the function $ R=\frac{q}{p}$ can be represented
as
\begin{equation*}
 R(z)=\sum\limits_{j=1}^{k}\dfrac{\alpha_j}{z-p_j},
%
\text{    where   }
%
\alpha_j=\dfrac{q(p_j)}{p'(p_j)}>0.
\end{equation*}
As we mentioned above  the set of all roots of $W(p,q)$ coincides with that of  the derivative
\begin{equation*}
 R^\prime(z)=-\sum\limits_{j=1}^{k}\dfrac{\alpha_j}{(z-p_j)^2}.
\end{equation*}

The following lemma is a modification
of~\cite[Theorem~2.5]{DS} and implies Theorem~\ref{th:wron}.
\begin{lemma}\label{lem.1}
All roots of $W(p,q)$ lie inside or on the above
circle  $C_0$.
\end{lemma}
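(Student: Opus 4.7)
The plan is to work directly with the equation $R'(z_0)=0$ written in its partial-fraction form, exploit the positivity of the residues $\alpha_j$, and extract geometric information about $z_0$ from the real and imaginary parts of that equation.

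First I would take a root $z_0=x+iy$ of $W(p,q)$. Since the interlacing hypothesis forces the critical points of $R$ off the real line, $y\neq 0$. Using the partial-fraction expression for $R'$, the equation
\[
\sum_{j=1}^{k}\frac{\alpha_j}{(z_0-p_j)^2}=0
\]
can be separated into real and imaginary parts by writing $(z_0-p_j)^{-2}$ explicitly with the real denominator $|z_0-p_j|^4$. Introducing the positive weights $\gamma_j:=\alpha_j/|z_0-p_j|^4$, the imaginary part (after canceling the common factor $-2y$) becomes
\[
\sum_{j=1}^{k}\gamma_j(x-p_j)=0,
\]
and the real part becomes
\[
\sum_{j=1}^{k}\gamma_j(x-p_j)^2=y^2\sum_{j=1}^{k}\gamma_j.
\]
The first identity states that $x$ is a convex combination of the $p_j$ with strictly positive weights, hence $p_1\le x\le p_k$; the second identity expresses $y^2$ as a weighted average of $(x-p_j)^2$.

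The next step is to translate the condition "$z_0\in\bar D_0$" into the algebraic inequality $(x-p_1)(p_k-x)\ge y^2$. Multiplying through by $\sum\gamma_j$ and substituting the expression for $y^2$, this inequality is equivalent to
\[
\sum_{j=1}^{k}\gamma_j(x-p_j)^2\;\le\;(x-p_1)(p_k-x)\sum_{j=1}^{k}\gamma_j.
\]
Expanding both sides and using the centering identity $\sum\gamma_j(x-p_j)=0$ to eliminate the $x^{2}$ and $x$ terms, the inequality collapses, after a short manipulation, to the purely combinatorial statement
\[
\sum_{j=1}^{k}\gamma_j\,(p_j-p_1)(p_j-p_k)\;\le\;0.
\]
This is clear term-by-term since $p_1\le p_j\le p_k$ forces every factor $(p_j-p_1)(p_j-p_k)$ to be non-positive, while $\gamma_j>0$.

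I expect the only subtle point to be making sure that the reduction to the last inequality is genuinely an equivalence (not just an implication in the wrong direction), so the algebra has to be arranged so that every step is reversible modulo the equation $\sum\gamma_j(x-p_j)=0$. Once that bookkeeping is done, the argument is essentially a one-line convexity estimate, and the positivity of the residues $\alpha_j=q(p_j)/p'(p_j)$ (guaranteed by the interlacing hypothesis on $p$ and $q$) is what makes the whole weighted-mean machinery work. No additional hypothesis on $q$ beyond interlacing is used, which matches the statement of the lemma.
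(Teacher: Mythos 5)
Your argument is correct, but it is genuinely different from the paper's. Both proofs start from the same partial--fraction identity $R'(z)=-\sum_{j}\alpha_j(z-p_j)^{-2}$ with $\alpha_j>0$, but the paper proceeds by contraposition and by angles: after normalizing $p_1+p_k=0$, it notes that for $|z|>\tfrac{p_k-p_1}{2}$ the cosine rule gives $\theta_k-\theta_1<\pi/2$ for $\theta_j=\arg(z-p_j)$, so the arguments of the terms $-\alpha_j(z-p_j)^{-2}$ all lie in an interval of length less than $\pi$; the terms then sit in an open half-plane and cannot sum to zero. You instead work directly at a critical point $z_0=x+iy$, separate real and imaginary parts to get the centering relation $\sum_j\gamma_j(x-p_j)=0$ and the second-moment relation $y^2\sum_j\gamma_j=\sum_j\gamma_j(x-p_j)^2$, and reduce the disk condition $y^2\le(x-p_1)(p_k-x)$ to $\sum_j\gamma_j(p_j-p_1)(p_j-p_k)\le 0$, which holds term by term. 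Your worry about reversibility is unfounded: from the polynomial identity
\begin{equation*}
(x-p_j)^2-(x-p_1)(p_k-x)=(p_j-p_1)(p_j-p_k)+(x-p_j)(2x-p_1-p_k)
\end{equation*}
one gets, after summing against $\gamma_j$ and invoking the centering relation, the exact equality
\begin{equation*}
\sum_j\gamma_j(x-p_j)^2-(x-p_1)(p_k-x)\sum_j\gamma_j=\sum_j\gamma_j(p_j-p_1)(p_j-p_k),
\end{equation*}
so the implication runs in the needed direction automatically. As for what each approach buys: the paper's angular estimate is the one it then transports by M\"obius transformations to every circle $C_j$ to conclude Theorem~\ref{th:wron}, and your moment computation would serve equally well there since the Wronskian identity \eqref{shifted.Wronskian} does all the transporting; your version is a direct (rather than contrapositive) proof, needs no normalization of $p_1+p_k$, and is in the spirit of classical Jensen-disk arguments, at the cost of a little more algebra.
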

\begin{proof}
Note that w.l.o.g. we can  assume that
$p_{1}+p_{k}=0$, since the statement is invariant under the addition of a real constant to the independent variable $z$.
Assuming that $p_{1}+p_{k}=0$  set
$\theta_j=\arg(z-p_j)$, $j=1,\ldots,k$. Since $p$ and $q$ are real polynomials,  we can restrict our attention to the
case where $z$ lies in the upper half-plane and, therefore,
\begin{equation}\label{lem.1.proof.1}
0<\theta_1<\theta_2<\ldots<\theta_{k}<\pi.
\end{equation}
Now suppose that $|z|>\dfrac{p_{k}-p_1}{2}$ which implies that  the
origin lies outside the circle of radius
$\dfrac{p_{k}-p_1}{2}$ centered at $z$. Therefore,
$\theta_{k}-\theta_1<\pi/2$, by the cosine rule. Since
$\arg(z-p_j)^{-2}=-2\theta_j$, then  by~\eqref{lem.1.proof.1}
we get
\begin{equation*}
-2\theta_k\leqslant\arg(z-p_j)^{-2}\leqslant-2\theta_1,\quad
j=1,\ldots,k,
\end{equation*}
and, therefore, since all $\alpha_j>0$
\begin{equation*}
2\theta_1\leqslant\arg \frac{-{\alpha_j}}{(z-p_j)^{2}}\leqslant 2\theta_k,\quad
j=1,\ldots,k.
\end{equation*}

Thus,  since $|2\theta_k-2\theta_1|<\pi$ one has that
all the numbers $\
\dfrac{-\alpha_j}{(z-p_j)^2}\ ,\; j=1,...,k$ lie  on one side w.r.t.  the straight
line through the origin with the slope $2\theta_1$. Since $ R^\prime(z)$ is the  sum of these
numbers, then $ R^\prime(z)\neq 0$ at least for $|z|>\dfrac{p_{k}-p_1}{2}$.
\end{proof}

Finally,  notice that the assertion  of Lemma~\ref{lem.1}  is invariant under M\"obius transformations of $\bC P^1$, since  we can transform any pair  of consecutive roots $(p_j,p_{j+1})$ into the pair $(\tilde p_1,\tilde p_k)$ of the smallest and largest roots of another polynomial with the same properties by applying a suitable M\"obius transformation. By \eqref{shifted.Wronskian}  the roots of  $W(p,q)$ do not change under any non-degenerate M\"obius transformation and the claim follows.  \end{proof}

%
%
Denoting  by $p_j(\alpha)$ the roots of the polynomial
$p+\alpha q$, we  get
$\lim_{\alpha\to +\infty}p_j(\alpha)= q_j$ for $j=1,\ldots,k-1,$ and
$\lim_{\alpha\to +\infty}p_k(\alpha)=+\infty$. Analogously,
$\lim_{\alpha\to -\infty}p_j(\alpha)= q_{j-1}$ for $j=2,\ldots,k,$ and
$\lim_{\alpha\to -\infty}p_1(\alpha)=-\infty$. Lemma~\ref{lem.1} together with
\eqref{shifted.Wronskian} imply  that all zeros of
Wronskian $W(p,q)$ lie in the intersection of all circles centered
at $\dfrac{p_k(\alpha)+p_1(\alpha)}2$ whose radii are equal to
$\dfrac{p_k(\alpha)-p_1(\alpha)}2$.
\medskip
\begin{corollary}
All roots of the Wronskian $W(p,q)$ lie in the intersection of the
closed disk centered at $\dfrac{p_{k}+p_{1}}{2}$ of
radius $\dfrac{p_{k}-p_1}{2}$ and the strip: $q_1\leqslant\text{Im } z\leqslant q_{k-1}$.
\end{corollary}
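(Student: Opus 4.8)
The plan is to split the target region into its two pieces. The closed disk centred at $(p_1+p_k)/2$ of radius $(p_k-p_1)/2$ is nothing but the closed disk $\bar D_0$ produced by Lemma~\ref{lem.1} applied to the original pair $(p,q)$, so it requires no new work; the whole difficulty lies in producing the strip. For this I would not look at $(p,q)$ alone but at the entire pencil $(p+\alpha q,\,q)$, $\alpha\in\bR$. By \eqref{shifted.Wronskian} one has $W(p+\alpha q,q)=W(p,q)$, so the roots we are localising do not move with $\alpha$, while Lemma~\ref{lem.1} confines them, for every $\alpha$, to the closed disk $\bar D_0(\alpha)$ having the segment $[p_1(\alpha),p_k(\alpha)]$ as a diameter. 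Consequently all roots of $W(p,q)$ lie in $\bigcap_{\alpha\in\bR}\bar D_0(\alpha)$, and everything reduces to computing this intersection.

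I would extract the strip by letting $\alpha\to\pm\infty$ and invoking the limiting behaviour of the two extreme roots recorded just before the corollary: $p_1(\alpha)\to q_1$ and $p_k(\alpha)\to+\infty$ as $\alpha\to+\infty$, while $p_k(\alpha)\to q_{k-1}$ and $p_1(\alpha)\to-\infty$ as $\alpha\to-\infty$. Writing the membership condition for $\bar D_0(\alpha)$ in the form $|z-c(\alpha)|\le r(\alpha)$ with $c(\alpha)=\tfrac{p_1(\alpha)+p_k(\alpha)}2$ and $r(\alpha)=\tfrac{p_k(\alpha)-p_1(\alpha)}2$, both the centre and the radius run off to infinity while their difference $c(\alpha)-r(\alpha)=p_1(\alpha)$ stays bounded and tends to $q_1$. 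A circle whose radius blows up while one of its diameter ends stays put flattens onto the line through that fixed end, so $\bar D_0(\alpha)$ degenerates to a half-plane whose boundary passes through $q_1$; symmetrically the limit $\alpha\to-\infty$ gives a half-plane bounded by the line through $q_{k-1}$. Their intersection is exactly the strip $q_1\le\operatorname{Im}z\le q_{k-1}$ of the corollary, and intersecting that strip with the finite disk $\bar D_0=\bar D_0(0)$ gives the asserted region.

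The step I expect to be the main obstacle is turning this degeneration into a rigorous inclusion, because on the boundary line the limit is of the indeterminate type $0\cdot\infty$: the factor measuring the distance of $z$ to the fixed diameter end vanishes exactly as the other factor blows up. To resolve it I would insert sharp asymptotics for the diameter ends into the inequality $|z-c(\alpha)|^2\le r(\alpha)^2$. From $p+\alpha q=0$ the escaping root grows linearly in $\alpha$, whereas the implicit function theorem at the simple root $q_1$ of $q$ gives $p_1(\alpha)=q_1+O(1/\alpha)$; substituting these and letting $\alpha\to+\infty$ shows that the inequality survives for every $z$ strictly on the correct side of the line through $q_1$ and is eventually violated on the other side, which pins the limiting half-plane down precisely. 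A last point I would check is that no finite $\alpha$ contributes a constraint not already implied by $\bar D_0(0)$ and the two limiting half-planes, so that the intersection $\bigcap_{\alpha\in\bR}\bar D_0(\alpha)$ is genuinely the disk intersected with the strip and nothing smaller.
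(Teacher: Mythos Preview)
Your approach is exactly the paper's: the paragraph preceding the corollary records the same limits $p_1(\alpha)\to q_1$, $p_k(\alpha)\to+\infty$ (and symmetrically for $\alpha\to-\infty$) and invokes Lemma~\ref{lem.1} together with \eqref{shifted.Wronskian} to trap the Wronskian roots in $\bigcap_\alpha \bar D_0(\alpha)$; the corollary is then stated without further proof.

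Two small remarks. First, your ``last point'' is unnecessary and almost certainly false: the corollary only asserts that the roots lie in $\bar D_0\cap\{\text{strip}\}$, which follows from the single inclusion $\bigcap_\alpha \bar D_0(\alpha)\subseteq \bar D_0(0)\cap\{\text{strip}\}$; there is no reason to expect (and generically it is not the case) that intermediate $\alpha$ contribute no extra constraints, so do not try to prove equality. Second, the limiting half-planes you describe are bounded by the \emph{vertical} lines through $q_1$ and $q_{k-1}$, so the resulting strip is $q_1\le \operatorname{Re} z\le q_{k-1}$; the ``$\operatorname{Im}$'' in the stated corollary appears to be a typo in the paper, and your own geometric description (``a half-plane whose boundary passes through $q_1$'') is the correct one.
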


\medskip
To prove Proposition~\ref{pr:wronsk}³ let us first quote the main result of \cite{Za}.

\begin{theorem}³\label{th:za}
Let $c_1, ... , c_d$ be a sequence of  (not necessarily distinct) points in the open unit disk
$D$ in the complex plane. Then there exists a unique proper holomorphic map $f : D \to D$ of
degree $d + 1$ normalized as $f(0) = 0$ and $f(1) = 1$ whose critical points are exactly $c_1, ... , c_d$.
(If a certain point is repeated several times among $c_1, ... , c_d$ this means that the corresponding critical value has the corresponding multiplicity.)
\end{theorem}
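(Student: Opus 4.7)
My plan is to prove Theorem~\ref{th:za} via a degree-theoretic argument on the space of normalized finite Blaschke products. The starting point is the classical fact that every proper holomorphic self-map of $D$ of degree $n$ is a finite Blaschke product
\[
f(z)=e^{i\theta}\prod_{j=1}^{n}\frac{z-a_j}{1-\bar a_j z},\qquad a_j\in D,\ \theta\in\bR.
\]
So the space $\mathcal B_{d+1}$ of such maps of degree $d+1$ with the normalization $f(0)=0$, $f(1)=1$ is a real-analytic manifold of real dimension $2d$: the condition $f(0)=0$ forces one of the $a_j$ to be $0$ (killing two real parameters) and $f(1)=1$ fixes the remaining angular parameter $\theta$ (killing one), while the total count started at $2(d+1)+1=2d+3$. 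I would also observe, via Riemann--Hurwitz applied to $f$ extended by reflection to a degree $d+1$ rational map of $\bCP$, that $f$ has exactly $d$ critical points in $D$ (the other $d$ lie outside $\bar D$ as Schwarz-reflected images).

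Next I would define the critical-point map
\[
\Phi\colon\mathcal B_{d+1}\longrightarrow\mathrm{Sym}^d(D),\qquad f\longmapsto(c_1,\ldots,c_d),
\]
between two smooth manifolds of the same real dimension $2d$. The core of the proof is to show $\Phi$ is a diffeomorphism, which I would split into three steps. First, properness: if $f_n\in\mathcal B_{d+1}$ is such that $\Phi(f_n)$ stays in a compact subset of $\mathrm{Sym}^d(D)$, then one extracts a subsequential limit $f$ and must rule out that any of the $a_j^{(n)}$ escape to $\partial D$ (which would cause a drop of degree and hence of critical-point count). This is controlled using the explicit identity
\[
\frac{f'(z)}{f(z)}=\sum_{j=1}^{d+1}\frac{1-|a_j|^2}{(z-a_j)(1-\bar a_j z)},
\]
so that if some $a_j$ drifts to $\partial D$ its contribution to the numerator of $f'$ degenerates, forcing a critical point to escape to $\partial D$ as well, contradicting compactness. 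Second, local invertibility: $\Phi$ is holomorphic in natural complex structures on $\mathcal B_{d+1}$ and $\mathrm{Sym}^d(D)$, so one computes $d\Phi$ at a convenient point (e.g.\ a Blaschke product with all zeros clustered near $0$, or after a M\"obius normalization, at $f(z)=z^{d+1}$-type limit), and shows that it is a $\bC$-linear isomorphism by a direct implicit function theorem argument. Third, a degree count: $\Phi$ is proper, holomorphic, and locally invertible at one point, hence has a well-defined topological degree which must be a strictly positive integer; since $\mathrm{Sym}^d(D)$ is connected and simply connected (it is a complex ball quotient by $S_d$, homeomorphic to $\bC^d$) and $\mathcal B_{d+1}$ can be seen to be connected, degree~$1$ follows from the fact that $\Phi$ has a single preimage at any one regular value.

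The main obstacle will be the properness step: ensuring that a sequence of normalized Blaschke products whose critical points stay in a compact $K\Subset D$ does not degenerate. Equivalently, one must show that the normalization $f(1)=1$ together with the interior concentration of critical points prevents the zeros $a_j$ from running to $\partial D$. A delicate point is that a single zero escaping to $\partial D$ produces a Schwarz-Pick singularity that pushes a critical point to the same boundary neighborhood; making this quantitative, via the hyperbolic metric estimates for Blaschke products and the identity above, is where the real work lies. Once properness is in hand, local holomorphic invertibility plus a connectedness argument yields the global diffeomorphism, from which both existence and uniqueness in Theorem~\ref{th:za} follow at once.
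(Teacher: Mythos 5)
You should first be aware that the paper does not prove this statement at all: Theorem~\ref{th:za} is quoted verbatim as the main result of \cite{Za} (Zakeri), and is used as a black box to deduce Proposition~\ref{pr:wronsk}. So there is no proof in the paper to compare yours against; what can be said is how your outline relates to Zakeri's published argument and whether it would work on its own. Your general strategy (normalized finite Blaschke products, the critical-point map $\Phi$ between two $2d$-dimensional spaces, properness, and a global topological argument) is essentially the right one and close in spirit to Zakeri's, who however closes the argument with \emph{injectivity of $\Phi$ plus invariance of domain} rather than with a degree count.

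The concrete gaps are these. First, your claim that $\Phi$ is holomorphic ``in natural complex structures'' is unjustified and false in the obvious coordinates: the critical points in $D$ are the roots of $\sum_j (1-|a_j|^2)\prod_{l\neq j}(z-a_l)(1-\bar a_l z)=0$, whose coefficients involve both $a_j$ and $\bar a_j$, so $\Phi$ is only real-analytic. This matters because your degree argument leans on holomorphicity twice: to get positivity of all local degrees (without which a proper, somewhere--locally-invertible map can have preimages cancelling in signed count) and to run a clean implicit-function-theorem step. Second, your final step --- ``degree $1$ follows from the fact that $\Phi$ has a single preimage at any one regular value'' --- is circular as stated: exhibiting even one regular value with exactly one preimage (say $c_1=\cdots=c_d=0$, where you must show $z^{d+1}$ is the \emph{only} normalized Blaschke product with a critical point of multiplicity $d$ at the origin, and that this value is regular) is precisely a uniqueness statement of the type the theorem asserts, and it is not carried out. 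Third, the properness step, which you correctly identify as the hard analytic core (a zero $a_j\to\partial D$ forces a drop of degree and hence the merger of an interior critical point with its Schwarz reflection on $\partial D$), is only sketched. As it stands the proposal is a plausible roadmap, not a proof; to repair it along established lines you would replace the degree count by a direct proof that $\Phi$ is injective (this is where the Schwarz--Pick/hyperbolic-metric work goes) and then conclude by invariance of domain, properness, and connectedness of $\mathrm{Sym}^d(D)$.
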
³

To finish the proof of  Proposition~\ref{pr:wronsk} note  that a real rational function $R=\frac{q}{p}$ maps the open upper half-plane on the open upper half-plane if and only if
 $p$ and $q$ have all real, simple and  interlacing roots and  opposite signs of their leading coefficients.  Since the upper half-plane can be mapped to the unit disk by a M\"obius transformation we can arbitrarily  prescribe $k-1$ critical points in the upper half-plane and find a degree $k$ map of the upper half-plane to itself with these (and no other) prescribed critical points. This map will be uniquely defined by Theorem~\ref{th:za} if we additionally use the restrictions as in Proposition~\ref{pr:wronsk}. \qed

 \medskip
The next two propositions give interesting information on the behavior of $R$ on and inside the circle $C_j$  and (in our opinion) are of independent interest.

 \begin{proposition}\label{pr:convex} In the above notation and for any $j=0,...,k-1$ the function $| R|^2(z)$ restricted to the  half-circle $C_j^+$ is convex as a function of the real part of $z$ where $C_j^+$ is the part of $C_j$ lying in the upper half-plane. In other words, $\Phi(x)=|R|^2(x+i\sqrt{r_j^2-(x-\rho_j)^2})$ where $x\in [p_j,p_{j+1}],\; r_j=\frac{p_{j+1}-p_j}{2},\; \rho_j=\frac{p_{j+1}+p_j}{2}$ is convex as a function of $x$.
 \end{proposition}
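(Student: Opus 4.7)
I would prove convexity by direct computation, starting from the same observation used in the proof of Lemma~\ref{lem.1}: for $z = x + iy$ on $C_j^+$, the identity $y^2 = (x - p_j)(p_{j+1} - x)$ makes
\[
|z - c|^2 \;=\; (x - c)^2 + y^2 \;=\; (p_j + p_{j+1} - 2c)\,x + (c^2 - p_j p_{j+1})
\]
a \emph{linear} function of $x$ for any real $c$. Consequently $\Phi(x) := |R(z(x))|^2 = \prod_m M_m(x)/\prod_m L_m(x)$ is a specific rational function of $x$, where $L_m(x) := |z - p_m|^2$ and $M_m(x) := |z - q_m|^2$ are linear and strictly positive on $(p_j, p_{j+1})$.

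Using the partial-fraction expansion $R(z) = \sum_{m=1}^k \alpha_m/(z - p_m)$ with $\alpha_m = q(p_m)/p'(p_m) > 0$, one can split $R$ into real and imaginary parts along $C_j^+$: writing $R(z) = U(x) - iy V(x)$ with $U(x) = \sum_m \alpha_m(x - p_m)/L_m(x)$ and $V(x) = \sum_m \alpha_m/L_m(x) > 0$, one obtains the compact expression $\Phi(x) = U(x)^2 + y(x)^2 V(x)^2$. I would then compute $\Phi''(x)$ explicitly, using $(y^2)' = -2(x - \rho_j)$, $(y^2)'' = -2$, and the identity $(U')^2 + (y'V + yV')^2 = r_j^2|R'(z)|^2/y^2$ that follows from $dR/dx = R'(z)(1 + iy')$ together with $1 + (y')^2 = r_j^2/y^2$.

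\textbf{Main obstacle.} The hardest step is to verify non-negativity of the resulting expression. A natural reformulation is to prove that $\log\Phi$ is convex (which, since $\Phi > 0$, implies $\Phi$ itself is convex): as each $\log L_m$ and $\log M_m$ is concave (being the log of a positive linear function of $x$), this amounts to
\[
\sum_{m=1}^{k}\,(\rho_j - p_m)^2\big/|z-p_m|^4 \;\;\geq\;\; \sum_{m=1}^{k-1}\,(\rho_j - q_m)^2\big/|z-q_m|^4
\]
on $(p_j,p_{j+1})\cap C_j^+$. Such an inequality must exploit the interlacing of $\{p_m\}$ and $\{q_m\}$, together with the fact that the poles $p_j, p_{j+1}$ lie on $C_j$ itself so that their contributions to the left-hand side blow up at the endpoints of $(p_j, p_{j+1})$. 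An alternative approach uses the Cauchy--Schwarz-type identity $\Phi(x) = A\,V(x) - \sum_{m<n}\alpha_m\alpha_n(p_m - p_n)^2/(L_m(x)L_n(x))$ with $A = \sum_m \alpha_m$, in which $A V(x)$ is manifestly convex on $(p_j, p_{j+1})$ while the correction term must be controlled by a careful regrouping exploiting the geometry of $C_j$.
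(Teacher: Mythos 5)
Your linearization $|z-c|^2=(p_j+p_{j+1}-2c)\,x+(c^2-p_jp_{j+1})$ on $C_j^+$ is correct and is essentially the same starting point as the paper's Lemma~\ref{lm:stew} (the paper works with $w=|z-p_{j+1}|^2$, an affine function of $x$, so the set-ups are equivalent). The difficulty is that the route you propose for the ``main obstacle'' cannot work: $\log\Phi$ is \emph{not} convex in general, so the displayed inequality you reduce the problem to is false. Concretely, take $k=4$, normalize the disk under consideration to the unit disk, and take
$$p_1=-N<q_1=-1-\eps<p_2=-1<q_2=1-\eps<p_3=1<q_3=1+\eps<p_4=N.$$
At $x=\rho_j=0$ (i.e.\ $z=i$) one has $|z-c|^4=(1+c^2)^2$, and your inequality becomes
$$\sum_{m=1}^{4}\frac{p_m^2}{(1+p_m^2)^2}\;\ge\;\sum_{m=1}^{3}\frac{q_m^2}{(1+q_m^2)^2}.$$
Since $c^2/(1+c^2)^2\to 1/4$ as $c\to\pm 1$ and $\to 0$ as $c\to\pm\infty$, the left side tends to $1/2$ while the right side tends to $3/4$ as $N\to\infty$ and $\eps\to 0$; hence $(\log\Phi)''(0)<0$ for $N$ large and $\eps$ small. (In this configuration $\big((\log\Phi)'(0)\big)^2\to 1$ as well, so $\Phi''(0)/\Phi(0)\to 0$: convexity survives only marginally, which is precisely why no ``soft'' strengthening such as log-convexity can hold.) Your alternative identity $\Phi=AV-\sum_{m<n}\alpha_m\alpha_n(p_m-p_n)^2/(L_mL_n)$ is correct, but for positive linear $L_m,L_n$ one has $\big(1/(L_mL_n)\big)''=2\big(a_m^2L_n^2+a_ma_nL_mL_n+a_n^2L_m^2\big)/(L_m^3L_n^3)\ge 0$, so each correction term is \emph{concave}; you have decomposed $\Phi$ into a convex plus a concave part, and the ``careful regrouping'' you defer is the entire content of the proposition. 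The same applies to the plan of computing $\Phi''$ explicitly: the non-negativity is exactly what remains unproved.

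What is missing is a global argument replacing a pointwise inequality. The paper's proof, after the same linearization, regards $|R|^2$ as a degree-$k$ rational function of the affine coordinate on all of $\bR P^1$, and uses the map $\Psi(\xi)=-(1-\xi)^2/\xi$ to show that the interlacing of the $p_m$ and $q_m$ forces the zeros and poles of this rational function to interlace on $\bR P^1$ outside two exceptional intervals, one of which is the image $(0,4)$ of $C_j^+$ and contains no zeros. A count of the real roots of $|R|^2(w)-(aw+b)$ then shows that every affine function meets the graph over $(0,4)$ at most twice, and a positive function tending to $+\infty$ at both ends of an interval whose graph meets every line at most twice is convex. Some such global counting input, exploiting the full interlacing structure rather than a termwise estimate, is needed; as written, your proposal does not contain a proof.
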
³





 \begin{figure}

\begin{center}
\includegraphics[scale=0.65]{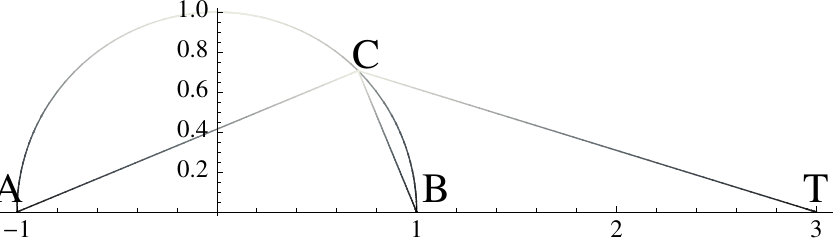}
\end{center}

\vskip 0.3cm

\caption {Illustration to the proof of Lemma~\ref{lm:stew}.}
\label{fig3}
\end{figure}

We will first prove   convexity of  $| R|^2$  w.r.t.  a more convenient  coordinate on $C_j^+$.
Denote temporarily by $A$ and  $B$ the roots $p_j$ and $p_{j+1}$ resp. and denote by $C$  an arbitrary point of the half-circle $C_j^+$.
Let  $w:=|BC|^2$ be the square of the length of the segment $BC$. Obviously, $w$ varies from $0$ to $(p_{j+1}-p_j)^2$ when $C$ runs along $C_j^+$ from $p_{j+1}$ to $p_j$. Let us  express
the function $|{R}|^2$ in terms of $w$.

\begin{lemma}\label{lm:stew} One has
\begin{equation}\label{eq:RR}
| R|^2(w)=\frac{\prod_{l=1}^{k-1}( s_lw+t_l)}{\prod_{l=1}^k(u_lw+v_l)},
\end{equation}
where $t_l=(q_l-p_{j+1})^2,\; v_l=(p_l-p_{j+1})^2,\; s_l=\frac{2q_l-p_j-p_{j+1}}{p_{j+1}-p_j},\;v_l=\frac{2p_l-p_j-p_{j+1}}{p_{j+1}-p_j}.$

\end{lemma}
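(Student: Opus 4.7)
The plan is to use Stewart's theorem (which is clearly the intended tool, given the figure reference) to express the squared distance $|C-X|^2$ from a point $C$ on the half-circle $C_j^+$ to an arbitrary point $X$ on the real line as an affine function of the parameter $w=|BC|^2$. Once such an affine formula is in hand, plugging in $X=p_l$ and $X=q_l$ immediately gives the factors of the numerator and denominator of $|R|^2=|q|^2/|p|^2=\prod_l|C-q_l|^2/\prod_l|C-p_l|^2$.

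More precisely, write $A=p_j$, $B=p_{j+1}$, $c=p_{j+1}-p_j$, so that $C$ lies on the circle of diameter $AB$. By Thales' theorem, the triangle $ACB$ has a right angle at $C$, hence
\[
|CA|^2=c^2-|CB|^2=c^2-w.
\]
For any point $X$ on the line $AB$ with directed distances $|AX|=x$ and $|XB|=c-x$, Stewart's theorem (with signed lengths, so the formula is valid whether $X$ lies inside or outside the segment $AB$) gives
\[
|CX|^2\cdot c=|CA|^2(c-x)+|CB|^2\,x-x(c-x)c.
\]
Substituting $|CA|^2=c^2-w$ and simplifying, I would obtain
\[
|CX|^2=(x-c)^2+w\cdot\frac{2x-c}{c}.
\]

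Now, setting $X=p_l$, so that $x-c=p_l-p_{j+1}$ and $2x-c=2p_l-p_j-p_{j+1}$, yields
\[
|C-p_l|^2=(p_l-p_{j+1})^2+w\cdot\frac{2p_l-p_j-p_{j+1}}{p_{j+1}-p_j}=u_l w+v_l,
\]
and analogously $|C-q_l|^2=s_lw+t_l$. Taking the product over $l$ in the numerator and denominator of $|R|^2(z)=\prod|z-q_l|^2/\prod|z-p_l|^2$ gives the required formula \eqref{eq:RR}.

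The only mild obstacle is making sure Stewart's identity is applied with the correct sign conventions so that it handles values of $X=p_l$ or $X=q_l$ lying outside the segment $[p_j,p_{j+1}]$ as well as inside; this is handled automatically by treating $|AX|$ and $|XB|$ as signed lengths along the oriented real line. Everything else is a direct substitution, and the resulting affine dependence of $|C-X|^2$ on $w$ is exactly what makes the subsequent proof of the convexity claim in Proposition~\ref{pr:convex} tractable.
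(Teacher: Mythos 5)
Your proof is correct and follows essentially the same route as the paper: both use Stewart's theorem together with the right angle at $C$ (i.e.\ $|CA|^2=|AB|^2-|CB|^2$) to show that the squared distance from $C$ to any real point is an affine function of $w=|BC|^2$, namely $|CX|^2=(x-c)^2+w\,\frac{2x-c}{c}$, and then substitute the roots $p_l$, $q_l$. The only cosmetic difference is that the paper applies Stewart to the triangle $ACT$ with cevian $BC$ and then checks the identity for $T\le p_{j+1}$ directly, while you apply it to triangle $ACB$ with foot $X$ and signed lengths; the resulting identity (and hence the lemma, including the corrected reading of the paper's typo $u_l=\frac{2p_l-p_j-p_{j+1}}{p_{j+1}-p_j}$) is the same.
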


 \begin{proof}

Take an arbitrary point $T$ on the real axis (assume for the moment that $T>p_{j+1}$) and consider the triagle $ACT$, see Fig.~\ref{fig3}.
By the well-known Stewart's theorem from elementary geometry applied to the triangle $ACT$ with a cevian $BC$ one gets

$$|BC|^2=\frac{|AB|}{|AT|} |CT|^2+\frac{|BT|}{|AT|} |AC|^2-|AB||BT|~.$$
Since $|AC|^2=|AB|^2-|BC|^2$ the latter formula is equivalent to
\begin{equation}\label{eq:ste}
|CT|^2=\frac{2T-A-B}{B-A}|BC|^2+(B-T)^2=\frac{(2T-A-B)z}{B-A}+(B-T)^2.
\end{equation}
One can easily check that \eqref{eq:ste} is valid even if  $T\le p_{j+1}$ which  implies \eqref{eq:RR}.\end{proof}³

\begin{remark}
Notice that although we are primarily interested in  $| R|^2(w)$ when $w$ belongs to the interval
$(0,(p_{j+1}-p_j)^2)$ the formula \eqref{eq:RR} defines
 $| R|^2(w)$ globally as a rational function of $w$ which we will be using below.
 \end{remark}

\begin{proof} [Proof of Proposition~\ref{pr:convex}] Invariance under an affine change of $z$ allows us  w.l.o.g. to assume  that $p_j=-1$ and $p_{j+1}=1$ which leads to the formula:
\begin{equation}\label{eq:fin}
| R|^2(w)=\frac{\prod_{l=1}^{k-1}(q_lw+(1-q_l)^2)}{\prod_{l=1}^{k}(p_lw+(1-p_l)^2)}.
\end{equation}

If we denote by $\xi$ any of the points $q_l$'s (resp. $p_l$'s), then the corresponding linear factor in the numerator
(resp.  denominator)  has the form  $\xi w+(1-\xi)^2$.  Its only root is given by the formula $\zeta=\Psi(\xi)=-\frac{(1-\xi)^2}{\xi}$. The rational map
$\Psi(\xi)$ is double covering of $\bR P^1\setminus (0,4)$ by $\bR P^1$, namely, the interval $(+0,1]$ is mapped to $(-\infty,0]$,  the interval $[1,+\infty)$ is mapped to $[0,-\infty)$, the interval $(-\infty,-1]$ is mapped to $(+\infty, 4]$, and, finally the interval $[-1,-0)$ is mapped to $[4,+\infty)$, see Fig.~\ref{fig4}.  This observation implies that if we consider the images of our roots $p_1<q_1<....<p_j=-1<q_j<p_{j+1}=1<...<q_{k-1}<p_k$ under the map $\Psi$, then $p_{j+1}$ will be mapped to $0,$ and $q_{j+1}<...<p_k$ will be mapped to the negative half-axis in the order-reversing manner. Analogously, $p_j$ is mapped to $4,$ and all roots $p_1<q_1<...<q_{j-1}$ will be mapped to $(4,\infty)$ in the order-reversing manner.
The only root $q_j\in(-1,1)$ can be mapped completely arbitrarily to $\bR P^1\setminus [0,4]$. In particular, if $q_j=0$, then $\Psi(q_j)=\infty$. Thus, considering $| R|^2(w)$  as a function on $\bR P^1$ we obtain that exactly $(k-2)$ of totally $k$ intervals between  consecutive roots of the denominator of  \eqref{eq:fin}  contain exactly one root of the numerator.  The remaining two intervals will be called {\it exceptional}. One of these exceptional intervals is $(0,4)$, where there are no roots of the numerator. The other exceptional interval is the one containing   $\Psi(q_j)$. Besides $\Psi(q_j)$ it contains one more root of the numerator  (counting multiplicities) as shown on Fig.~\ref{fig4}. (Recall that the numerator and denominator of \eqref{eq:fin}  are to be considered as homogeneous polynomials of degree $k$ in homogeneous coordinates on $\bR P^1$, i.e. the numerator has always a root at $\infty$.)
Notice that it might happen that $\Psi(q_j)=\Psi(p_i),\; (i\neq j, j+1)$. In this case $\deg |R|^2(w)$ decreases by $1$ but the required statement still holds.

The function $| R|^2(w)$ restricted to each closed non-exceptional interval on $\bR P^1$ maps it bijectively to  the whole $\bR P^1$. Consider
now the behavior of $| R|^2(w)$ on $(0,4)$. It is strictly positive there and tends to $+\infty$ when $w\to 0+$ or $w\to 4-$. One can immediately see that  there can be at most two solutions to the equation $ | R|^2(w)=aw+b$ on $(0,4)$, where $a, b$ are arbitrary  real numbers. Indeed, consider the equation   $ | R|^2(w)=aw+b$ globally on $\bR P^1$. Notice that unless $\Psi(q_j)=\Psi(p_i),\; (i\neq j, j+1)$ the degree of $| R|^2(w)$ as a rational function equals $k$ and, therefore, the latter equation can have at most $k$ real solutions on the whole $\bR P^1$ counting multiplicities. We already know that under the same assumption this equation has exactly one real solution on each of  the $(k-2)$ non-exceptional intervals. Thus, there can be at most $2$ solutions on $(0,4)$, see Fig.~\ref{fig4}. Finally, notice that a  smooth positive function defined on $(0,4)$, tending to $+\infty$ on both ends and whose graph intersects  any straight line at most twice (counting multiplicities) is convex.  Thus,  we have proved that $|R|^2(w)$ is a convex function of $w\in (0,4).$  (The exceptional case $\Psi(q_j)=\Psi(p_i),\; (i\neq j, j+1)$ is considered  in exactly the same manner.)  To prove that $|R|^2(x)$ is convex where $x$ is the real part of $z$ on the upper half-circle of the unit circle notice that $w=2(1-x)$. The latter relation is easily obtained from  the Pythagorean theorem. Since convexity is preserved under affine changes of coordinates the result follows. \end{proof}³

  \begin{figure}

\begin{center}
\includegraphics[scale=0.5]{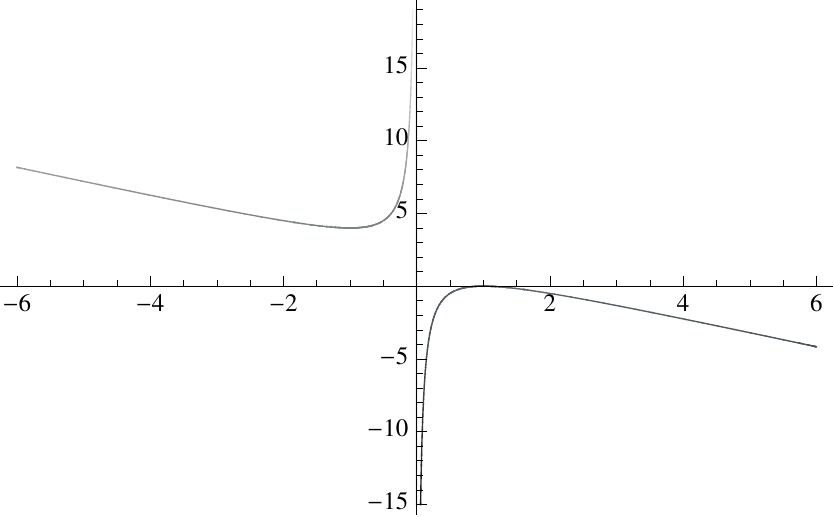} \quad \includegraphics[scale=0.48]{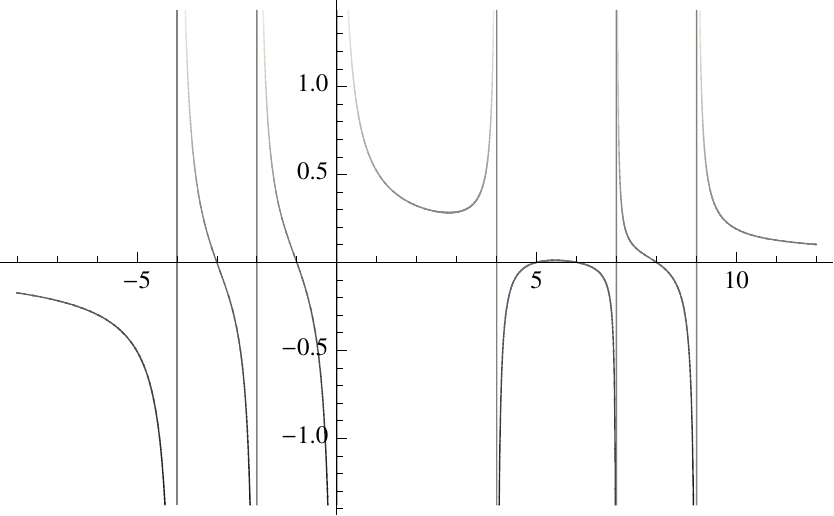}
\end{center}

\vskip 0.3cm

\caption {$\Psi(\xi)=-\frac{(1-\xi)^2}{\xi}$ and  an example of $| R|^2(w),\;w\in \bR$. }
\label{fig4}
\end{figure}

Now  consider the family of level curves  $\Ga_r:\{| R|^2(z)=r^2 \ge 0\}$ and intersect all curves in this family with a given disk $D_j$, see Fig.~\ref{fig7}. Call a positive number  $r$  ³{\it non-critical} for $R(z)$ if it is not the  modulus of some of its critical values.  Notice that if $r>0$ is non-critical,  then the curve
 $\Ga_r$ consists of a number of  smooth ovals and it is invariant under the complex conjugation. Set $m=\min_{z\in C_j^+} |R|(z)$.
Notice that by Proposition~\ref{pr:convex}  the value $m$ is attained at the unique point $P$ on $C_j^+$.

 \begin{proposition}\label{pr:imp}  For any $0<r\le m$ the intersection of the level curve $\Ga_r: | R|^2(z)=r^2> 0$ with $\bar D_j$ is a single oval containing $q_j$, where $\bar D_j$ is the closure of $D_j$. For any $r>m$  this intersection is a pair of arcs each crossing the interval $(p_j,p_{j+1})\subset \bR$.  
 \end{proposition}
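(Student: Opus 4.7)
The plan is a Morse-theoretic analysis of $f(z):=|R|^2(z)$ on $\bar D_j$, using Theorem~\ref{th:wron} and Proposition~\ref{pr:convex} to control all critical behaviour. By Theorem~\ref{th:wron}, $R$ has no critical point in the open disk $D_j$, so the only critical point of $f$ inside $D_j$ is the simple zero $q_j$ of $q$, which is a non-degenerate absolute minimum with value $0$. On the boundary circle $C_j$, Proposition~\ref{pr:convex} together with complex conjugation says $f|_{C_j^+}$ is convex in $\Re z$ with its unique minimum $m^2$ attained at $P$, symmetrically for $f|_{C_j^-}$ at $\bar P$, while $f\to+\infty$ at the poles $p_j,p_{j+1}$. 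Finally, $R'=W(p,q)/p^2$ has no real zero (again by Theorem~\ref{th:wron}), so $R$ is strictly monotone on $(p_j,p_{j+1})$; hence for every $r>0$ the equation $|R(x)|=r$ has exactly two real solutions $x_1<q_j<x_2$ in this interval.

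For $0<r\le m$ we have $f\ge m^2\ge r^2$ on $C_j$, so the sublevel set $V_r:=\{f\le r^2\}\cap\bar D_j$ lies in the open disk $D_j$, with equality only at $\{P,\bar P\}$ in the borderline case $r=m$. Starting from $V_0=\{q_j\}$ and growing $r$, Morse theory on $D_j$ shows that $V_r$ stays a topological disk for $r\in(0,m)$ because no interior critical value is met, so its boundary $\Ga_r\cap\bar D_j$ is a single simple closed oval enclosing $q_j$. At $r=m$ this oval merely acquires tangential contact with $C_j$ at $P$ and $\bar P$ but remains one oval, which gives the first assertion.

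For $r>m$, convexity of $f|_{C_j^+}$ combined with $f\to+\infty$ at its endpoints shows that $\{f\le r^2\}\cap C_j^+$ is a single sub-arc around $P$; by conjugation the same holds for $C_j^-$ around $\bar P$. Hence $\Ga_r\cap C_j$ consists of exactly four points, and since $R$ still has no critical point in $D_j$, the set $\Ga_r\cap\bar D_j$ is a smooth compact $1$-manifold with those four boundary points---a disjoint union of two arcs. It remains to show that each of these arcs meets the real interval $(p_j,p_{j+1})$.

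This is the crux. Complex conjugation is an involution of $\Ga_r\cap\bar D_j$ whose fixed-point set inside $\bar D_j$ is exactly $\{x_1,x_2\}$. Each arc is either self-conjugate---in which case it must cross $\bR$ at a fixed point of the involution---or is swapped with a distinct conjugate arc and lies entirely off $\bR$. If any arc were of the second type, conjugation would pair our two arcs into a single conjugate pair and $\Ga_r\cap\bar D_j$ would be disjoint from $\bR$, contradicting the presence of $x_1,x_2$. Hence both arcs are self-conjugate, each crosses $\bR$ at one of $x_1,x_2$, and connects a point of $C_j^+$ to its complex conjugate on $C_j^-$. The main obstacle is the Morse-theoretic bookkeeping at the transition $r=m$, where the boundary minima of $f$ at $P,\bar P$ cause the single-oval picture to reorganize into two arcs; once Theorem~\ref{th:wron} and Proposition~\ref{pr:convex} are in force, this transition together with the symmetry argument above forces the stated geometry.
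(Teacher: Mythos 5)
Your first half (the case $0<r\le m$) is sound and runs essentially parallel to the paper's argument: no critical points of $|R|^2$ in $D_j$ except the zero $q_j$, sublevel sets trapped inside $D_j$ because $|R|\ge m$ on $C_j$, hence a single family of ovals around $q_j$ that first touches $C_j$ at $P$ and $\bar P$ when $r=m$. The problem is in the case $r>m$, at the step you yourself call the crux. A compact $1$--manifold with four boundary points is a disjoint union of two arcs \emph{and possibly some circles}; you silently drop the circles, and that omission is not innocent here. A circle component lying in the open disk $D_j$ cannot be excluded by the absence of critical points: the minimum principle for $\log|R|$ only shows that such a circle must enclose the zero $q_j$, and a circle around $q_j$ on which $|R|=r>m$, crossing $\bR$ exactly at $x_1$ and $x_2$, is perfectly compatible with everything you have established so far. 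In that scenario your two arcs would connect $A_1$ to $A_2$ inside the upper half-disk and $\bar A_1$ to $\bar A_2$ inside the lower half-disk, forming a conjugate pair disjoint from $\bR$, while $x_1,x_2$ sit on the closed component. Your final contradiction (``$\Ga_r\cap\bar D_j$ would be disjoint from $\bR$'') therefore does not go through, because it presupposes exactly the two-arc structure you have not yet proved. Note also that max/min-principle and winding-number considerations alone do not kill this configuration, so it is a genuine missing idea rather than a routine verification.

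The missing ingredient is a separation argument, which is implicitly what the paper uses via the tangency picture at $r=m$. Since $q_j$ is the only zero of $R$ in $D_j$, the minimum principle shows that $\{|R|\le m\}\cap\bar D_j$ is \emph{exactly} the closed region bounded by the oval $O_m$ (any other component of $\{|R|<m\}\cap D_j$ would have an interior minimum of $|R|$ with no zero). Hence for $r>m$ the set $\Ga_r\cap\bar D_j$ is confined to the two \emph{disjoint} crescents of $\bar D_j$ cut off by $O_m$, the left one containing $x_1$ and the pair $A_1,\bar A_1$, the right one containing $x_2$ and $A_2,\bar A_2$. This immediately excludes a closed oval through both $x_1$ and $x_2$ (it would be connected but meet both crescents), and inside each simply connected crescent the trace of $\Ga_r$ is a $1$--manifold with exactly two boundary points and no circles (minimum principle again), i.e.\ a single self-conjugate arc, which must then cross $\bR$ at the unique available real point. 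With that insertion your argument closes; without it, the second assertion of the proposition is not proved.
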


 \begin{proof}
 Let us first show that for any non-critical $r$ the curve $\Ga_r\subset \bC P^1$ consists of exactly $k$ smooth ovals each of which intersects $\bR$ in two points and is, therefore, invariant under the complex conjugation. Notice that the function $R(z)$ induces  a  $k$-fold covering of the circle of radius $r$  by $\Ga_r$. For a non-critical $r$ the curve $\Ga_r$ consists of at most $k$ smooth ovals and is invariant under conjugation. Each such oval is either self-conjugate and, thus intersecting $\bR$ or does not intersect $\bR$ and, then it has a complex conjugate pair. Observe that the function $|R|^2(z)$ restricted to $\bR$ coincides with $R^2(z)$ and is  a real non-negative rational function of degree $2k$.
 Since $|R|^2(z)$ vanishes at each $q_j\in (p_j,p_{j+1})$ and tends to $+\infty$ when $z\to p_j$ one can easily observe that it is convex on each $(p_j,p_{j+1})$ implying that for any $r>0$ the curve $\Ga_r$ intersects $\bR$ exactly $2k$ times and, thus, consists of $k$ self-conjugate ovals for any non-critical $r$.

Notice that singularities of $\Ga_r,\, r>0$ can only occur at the critical points of $R(z)$. Thus,  since $D_j$ contains no critical points of $R(z)$,  any curve  $\Ga_r,\, r>0$ restricted to  $D_j$ is non-singular. The function $|R|^2(z)$ is non-negative in $\bC$ and vanishes only at the roots $q_1<...<q_{k-1}$. Thus, it vanishes only once inside $D_j$, namely, at $q_j$. Therefore, for sufficiently small $r>0$ the curve $\Ga_r$ intersected with  $D_j$ is a small oval around $q_j$.  Again due to the absence of critical points inside $D_j$ this family of ovals persists until $r$ reaches the value $m$. The oval $O_m$ corresponding to $r=m$ will be tangent to the circle $C_j$, see Fig.~\ref{fig7}. Indeed, since $m=\min|R|^2(z)$ on $C_j^+$ the oval $O_m$ has to have a common point $P$ with $C_j^+$. The latter point $P$ is unique due to convexity  of  $|R|^2(z)$ on $C_j^+$ w.r.t. the real part of $z$ and since $O_m$ is smooth $P$ is the tangency point of $O_m$ and $C_j^+$. The remaining part of $D_j$ consists of two crescents each of  which will be covered by a family of arcs formed by intersecting $\Ga_r, r>m$ with $D_j$. Indeed,  denote the intersection points of $O_m$ with $\bR$ as $a<b$. Note that $|R(z)|$ restricted to $C_j^+$ is strictly monotone decreasing from $+\infty$ to $m$ ³and then strictly monotone increasing back to $+\infty$ when $z$ moves along $C_j^+$ from $p_j$ to $p_{j+1}$. Thus we get that for each $r>m$ there exist a unique point in $(p_j, a)$  and a unique point on the part on $C_j^+$ from $p_j$ to $P$ such that $|R(z)|=r$. A similar pair of points exists on $(b,p_{j+1})$ and the part of $C_j^+$ from $P$ to $p_{j+1}$ resp.   The curve $\Ga_r$ must necessarily connect the first pair of points and the second pair of points. Indeed, if we follow $\Ga_r$ from the point of $\bR$, then it can  not stay within $D_j$ since it can not return back to the real axis. Moreover,  it can only leave $D_j$ at the unique point on $C_j^+$ due to convexity of $|R|^2(z)$ on $C_j^+$ w.r.t the real part of $z$.    \end{proof} 



  \begin{figure}

\begin{center}
\includegraphics[scale=0.5]{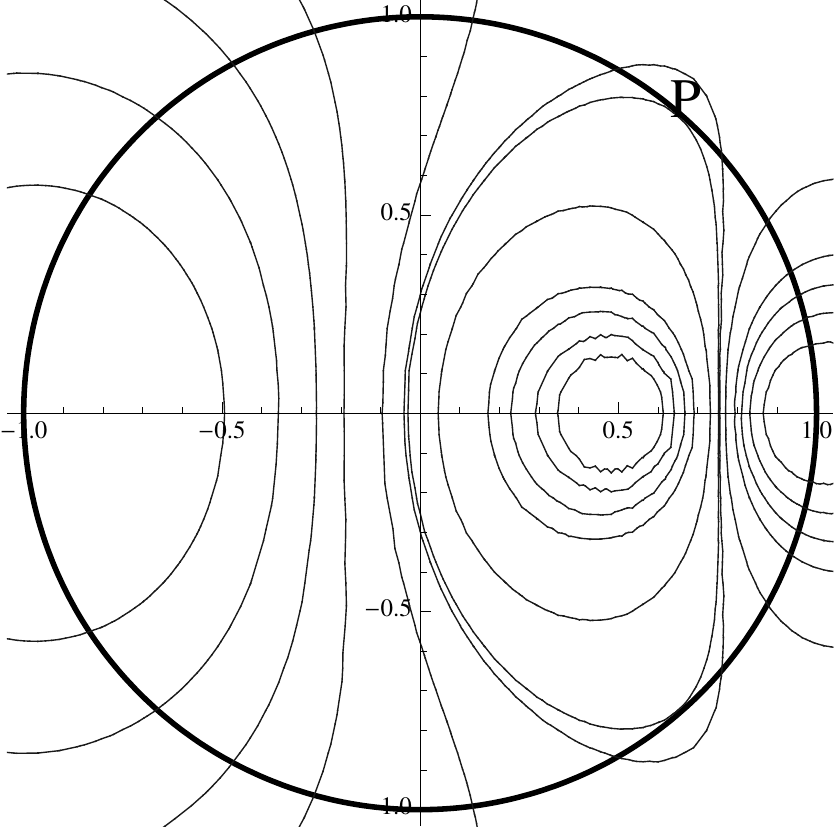}
\end{center}

\vskip 0.3cm

\caption {Level curves of $| R|^2(z)$  within the disk $|z|\le 1$ where  $R(z)=\frac{(z-1/2)(z-2)(z+2)}{(z-1)(z+1)(z-3)(z+3)}$.}
\label{fig7}
\end{figure}

Let us finally settle Theorem~\ref{th:main}.

\begin{proof} A maximal open contractible univalent  domain $D\subset \bC P^1$ of a holomorphic function $F$ has the property that  $F$ is injective on $D$ but  some points on  the boundary of $D$ have the same image. In our case the image under $R$ of the real interval $[p_j,p_{j+1}]$ is the whole $\bR P^1$ with $\infty$ covered twice by $p_j$ and $p_{j+1}$. Analogously, the image under $R$ of the half-circle $C_j^+$ is a closed curve starting and ending at $\infty$. If we show that $R(C_j^+)$  is a simple loop in $\bC P^1$ non-intersecting $\bR P^1$ at points other  than $\infty$, then Theorem~\ref{th:main} will easily follow. Indeed, in this case the open half-disk $D_j^+$ bounded by $[p_j,p_{j+1}]$ and $C_j^+$ will be mapped bijectively to the part of the lower half-plane $\Im z <0$ lying above the loop $R(C_j^+)$. Analogously, the open half-disk $D_j^-$ bounded by $[p_j,p_{j+1}]$ and $C_j^-$ will be mapped bijectively to the part of the upper half-plane $\Im z >0$ lying below the loop $R(C_j^-)$. (Indeed, our domain is a disk which is mapped locally diffeomorphically on its image and its boundary is mapped onto the union of 2 closed smooth loops having a single common point.)  The fact that  $R(C_j^+)$  is a simple loop is almost true except for the special case when $\deg R=2$. Notice that  any function $R$ as in
\eqref{eq:main}  restricted to  $\bR P^1$  induces a $k$-fold covering of $\bR P^1$ by $\bR P^1$. Since $\deg R=k$ there is no points in $\bC P^1\setminus \bR P^1$ whose image under $R$ is real. Therefore, no points from $C_j^+$ except for its endpoints are mapped to $\bR P^1$. To prove that
$R(C_j^+)$  is a  simple loop  for $\deg R>2$ notice that our statement is invariant under the action of the M\"obius group on $R$ and we can w.l.o.g. assume that $p_1=-1<q_1=0<p_2=1<q_2<p_3<...<q_{k-1}<p_k$.

\begin{lemma}\label{lm:final} Under the above assumptions and if $\deg R(z)>2$, then the function  $\Re R(z)$ is a monotone decreasing when restricted to  $C_1^+$ parameterized from $p_1$ to $p_2$. In the special case $\deg R(z)=2$ the function $\Re R(z)$  is constant.  (Here $\Re F$ means the real part of $F$.)
\end{lemma}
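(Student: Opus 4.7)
The plan is to parametrize the half-circle $C_1^+$ by $z=e^{i\theta}$, $\theta\in[0,\pi]$, note that traversing $C_1^+$ from $p_1=-1$ to $p_2=1$ corresponds to $\theta$ decreasing from $\pi$ to $0$, and to change variables to $x:=\cos\theta=\Re z$, which then increases monotonically from $-1$ to $1$. Monotonicity of $\Re R$ along the stated parametrization is therefore equivalent to the statement that $\frac{d}{dx}\Re R<0$ on $(-1,1)$ when $\deg R>2$, and $\frac{d}{dx}\Re R\equiv 0$ when $\deg R=2$.

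The main tool will be the partial fraction expansion $R(z)=\sum_{j=1}^{k}\alpha_j/(z-p_j)$ already used in the proof of Theorem~\ref{th:wron}, in which all $\alpha_j>0$. On the unit circle $|z|=1$ a direct computation yields
\[
\Re\frac{1}{e^{i\theta}-p_j}=\frac{x-p_j}{1-2p_j x+p_j^2},\qquad \frac{d}{dx}\frac{x-p_j}{1-2p_j x+p_j^2}=\frac{1-p_j^2}{(1-2p_j x+p_j^2)^2}.
\]
The key observation is now visible: for $j=1$ (where $p_1=-1$) and $j=2$ (where $p_2=1$) the factor $1-p_j^2$ vanishes, so the two poles lying on $C_1^+$ itself contribute nothing to the derivative. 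For every remaining $j\geq 3$ one has $p_j>1$, hence $1-p_j^2<0$; the denominator is strictly positive for $x\in[-1,1]$ (no other pole meets $C_1^+$); and $\alpha_j>0$. Each such summand is therefore strictly negative, so
\[
\frac{d}{dx}\Re R(e^{i\theta})=\sum_{j=3}^{k}\alpha_j\,\frac{1-p_j^2}{(1-2p_j x+p_j^2)^2}
\]
is strictly negative throughout $(-1,1)$ whenever $k\geq 3$, and identically zero when $k=2$ (for then the sum is empty). This yields both conclusions of the lemma.

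I do not foresee a substantial obstacle: the computation is elementary, and the cancellation at $j=1,2$ is structural, reflecting the fact that $D_1$ was defined with $[p_1,p_2]$ as its diameter so those two poles lie exactly on its boundary. The only background facts needed are the partial fraction expansion with positive residues $\alpha_j$ and the location of the remaining $p_j$ strictly outside $[-1,1]$, both immediate from the interlacing hypothesis.
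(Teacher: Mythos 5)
Your proof is correct and follows essentially the same route as the paper: expand $R=\sum_j \alpha_j/(z-p_j)$ with $\alpha_j>0$, restrict to the unit circle via $x=\Re z$, observe that the two terms with poles at $p_1=-1$ and $p_2=1$ are constant there (the paper writes them explicitly as $\alpha_1/2-\alpha_2/2$), and differentiate the remaining terms to get $\alpha_j(1-p_j^2)/(1+p_j^2-2p_jx)^2<0$ for $p_j>1$. This matches the paper's computation (which contains a harmless typo, $1-p_j$ in place of $1-p_j^2$), and your handling of the degenerate case $k=2$ as an empty sum agrees with the paper's conclusion that $\Re R$ is then constant.
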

\begin{proof} Let us express $\Re R(x)$ explicitly.  Recall that
$$R(z)=\sum_{j=1}^k\frac{\al_j}{z-p_j}=\frac{\al_1}{z+1}+\frac{\al_2}{z-1}+\sum_{j=3}^k\frac{\al_j}{z-p_j},$$
where $\al_j>0$ and $p_j>1$ for $ j>2$. Then substituting  $z=x+i y$ one gets
$$\Re R(x+iy)=\frac{\al_1(x+1)}{(x+1)^2+y^2}+\frac{\al_2(x-1)}{(x-1)^2+y^2}+\sum_{j=3}^k\frac{\al_j(x-p_j)}{(x-p_j)^2+y^2}.$$
To show that $\Re R$ is monotone decreasing on the upper half of the unit circle let us parameterize it by $x\in[-1,1]; \; y=\sqrt{1-x^2}$. Then
$$\Re R(x)=\frac{\al_1(x+1)}{(x+1)^2+(1-x^2)}+\frac{\al_2(x-1)}{(x-1)^2+(1-x^2)}+\sum_{j=3}^k\frac{\al_j(x-p_j)}{(x-p_j)^2+(1-x^2)}=$$
$$=\frac{\al_1}{2}-\frac{\al_2}{2}+\sum_{j=3}^k\frac{\al_j(x-p_j)}{(x-p_j)^2+(1-x^2)}.$$
Notice that each summand $\Psi_j(x)=\frac{\al_j(x-p_j)}{(x-p_j)^2+(1-x^2)}$ is a monotone decreasing function for $\al_j>0,\; p_j>1, \;x\in[-1,1]$.  Indeed, simple calculation shows that
$$\Psi'_j(x)=\al_j\frac{1-p_j}{(1+p_j^2-2p_jx)^2}<0.$$ \end{proof}³
Thus, we conclude that since $\Re R$ is a monotone decreasing function the curve $R(C_j^+)$ is a simple loop starting and ending at $\infty$ and not intersecting $\bR P^1$. Theorem~\ref{th:main} is settled for $\deg R>2$.  In case $\deg R=2$ one can easily see  that $R(C_j^+)$ is a double covering of a vertical ray in $\bC$ lying below the real axis.
\end{proof}

\begin{corollary} The number $m=\min_{z\in C_j^+} |R|(z)$ is smaller than the least modulus among the critical values of $R(z)$.

\end{corollary}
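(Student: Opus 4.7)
The plan is to show the inclusion $\{|w|<m\}\subset R(D_j)$ and then, by a preimage-sheet count, rule out critical values in this disk (in the non-degenerate case $\deg R>2$, to which Lemma~\ref{lm:final} applies; for $\deg R=2$ one checks directly that $m$ equals the unique critical modulus).

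By Theorem~\ref{th:main} the map $R|_{D_j}$ is a biholomorphism onto a domain $\Omega$ whose boundary equals $R(C_j^{+})\cup R(C_j^{-})$. The very definition of $m$ forces every point of $\partial\Omega$ to have modulus $\geq m$; since $0=R(q_j)$ lies in $\Omega$ and $\{|w|<m\}$ is connected and disjoint from $\partial\Omega$, this open disk is entirely contained in $\Omega$.

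Next, let $v$ be any critical value of $R$ and $z_0$ the corresponding critical point. Theorem~\ref{th:wron} forces $z_0\in\Omega_p\subset\bar D_0\setminus\bigcup_{i=1}^{k-1}D_i$; by Lemma~\ref{lm:final} (extended to $C_j^{-}$ by complex conjugation) one has $R'\ne 0$ on all of $C_j^{\pm}$, so $z_0\notin\bar D_j$. Applying Proposition~\ref{pr:imp} to every $D_i$ with its own threshold $m_i$, together with the analogous local statement at the zero of $R$ at $\infty$, I would show that for $r<\min_{i}m_i$ the preimage $R^{-1}(\{|w|\le r\})$ is a disjoint union of $k$ closed topological disks, one inside each $\bar D_i$ and one around $\infty$, each mapped biholomorphically onto $\{|w|\le r\}$. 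Since the critical point $z_0$ lies in none of these $k$ sheets, a count using $\deg R=k$ rules out any critical value in $\{|w|<\min_i m_i\}$, hence $\rho_{\text{crit}}\geq\min_i m_i$.

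The hard part is upgrading this bound to the strict inequality $\rho_{\text{crit}}>m_j$ for the specific $j$ under consideration. I expect this to come from tracking the continuous deformation of the $k$ preimage sheets as $r$ crosses each threshold $m_i$ in turn, and arguing that the first collision of two sheets in $\Omega_p$, which produces the smallest critical value, cannot occur until $r$ has surpassed $m_j$ itself. The main obstacle is the combinatorial analysis of which pairs of sheets can meet at each $z_0\in\Omega_p$, based on the local geometry between adjacent disks $\bar D_i,\bar D_{i+1}$ (which are tangent at $p_{i+1}$), together with using the \emph{strict} monotonicity of $\Re R$ on $C_j^{\pm}$ from Lemma~\ref{lm:final} to convert the non-strict bound $\geq$ into the strict $>$.
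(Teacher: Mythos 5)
Your first three paragraphs are sound and reproduce essentially the only argument the paper's machinery supports: the inclusion $\{|w|<m\}\subset R(D_j)$ via Theorem~\ref{th:main}, and then, by applying Proposition~\ref{pr:imp} to \emph{every} disk (including the one around the zero at $\infty$) and counting against $\deg R=k$, the conclusion that no critical value has modulus below $\min_i m_i$ (with a one-line extra remark at $r=\min_i m_i$ this becomes strict). Since the paper prints this corollary with no proof at all, that is as close to the intended derivation as one can get. The problem is your fourth paragraph: it is a plan, not a proof, and the step you isolate as ``the hard part'' --- upgrading $\min_i m_i$ to the particular $m_j$ --- is exactly where the attempt stops.

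That remaining step cannot be carried out, because $m_j<\rho_{\mathrm{crit}}$ actually fails for indices $j$ that do not realize $\min_i m_i$. Take $p=z(z+1)(z-N)$ and $q=(z+\tfrac12)(z-\tfrac{N}{2})$ with $N$ large, so that $R=\sum_i \alpha_i/(z-p_i)$ with $\alpha_1,\alpha_2\to\tfrac14$ and $\alpha_3\to\tfrac12$. On the half-circle over $[-1,0]$ one computes $|R|\approx \tfrac{1}{2\sin\theta}$, so $m_1=\tfrac12+o(1)$; on the other hand, two of the four roots of $W(p,q)$ lie near $N(1\pm i)/2$ (just outside the tops of $C_2^{\pm}$, inside $\Omega_p$), and the corresponding critical values are $\approx \mp i/N$, so $\rho_{\mathrm{crit}}=O(1/N)\ll m_1$. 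Hence only the weaker statement with $\min_i m_i$ --- precisely what your first three paragraphs establish --- is correct; your proposed ``first collision'' analysis can only control the two sheets that actually merge first (for which $\rho_{\mathrm{crit}}>m_a, m_b$ does follow by your tangency argument), not an arbitrary $j$. Your flagging of this step as unresolved is therefore well founded: the gap is genuine, and as stated for a general $j$ the corollary itself appears to be an overstatement rather than something your outline could eventually reach.
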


Finally, let us come back to the by Hermite-Biehler theorem
stating, for given monic real polynomials $p$ and $q$ of degrees
$k$ and $k-1$, respectively, that the polynomial
\begin{equation*}
s(z)=p(z)+i\alpha q(z)
\end{equation*}
has all zeroes in the upper half-plane if and only if $\alpha<0$
and the polynomials have real, simple and interlacing zeroes. More
generally, by Hermite-Biehler theorem, all roots of the equation
\begin{equation}\label{eq.1}
\dfrac{q(z)}{p(z)}=re^{i\varphi},\qquad
r>0,\,\,\varphi\in(-\pi,\pi].
\end{equation}
lie in the upper (resp. lower) half-plane if and only if the
polynomials have real, simple and interlacing zeroes and
$\varphi\in(-\pi,0)$ (resp. $\varphi\in(0,\pi)$). All roots of the
equation~\eqref{eq.1} are real and interlaces both zeroes of the
polynomials $p$ and $q$ if and only if $\varphi$ equals $0$ or
$\pi$.

Theorem~\ref{th:main} allows us to locate the roots of the
equation~\eqref{eq.1} more precisely and, therefore, to improve the
Hermite-Biehler theorem. In fact, Theorem~\ref{th:main} implies the following
corollary.

\begin{corollary}\label{corollary.1}
Each open disk $D_j$, $j=1,\ldots,k$, as well as the open disk
$\mathbb{C}\setminus \overline{D}_0$ on $\mathbb{C}P^1$ contains
at most one root of the equation~\eqref{eq.1}.
\end{corollary}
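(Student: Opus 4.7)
The plan is a direct reduction to Theorem~\ref{th:main}, carried out separately for the bounded disks and for the exterior disk. For each $j=1,\ldots,k-1$, Theorem~\ref{th:main} asserts that $D_j$ is a univalent disk for $R$, so $R$ is injective on $D_j$. The solutions of $q(z)/p(z)=re^{i\varphi}$ are precisely the preimages under $R$ of the single point $re^{i\varphi}\in\bC P^1$, hence at most one such preimage can lie in any fixed $D_j$. This disposes of the bounded case in one line.

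For the exterior disk $U:=\bC P^1\setminus\overline{D_0}$ I would invoke the M\"obius invariance already used in the last paragraph of the proof of Lemma~\ref{lem.1}. Pick any root $q_i$ of $q$ and choose a real M\"obius transformation $\mu$ of $\bC P^1$ with $\mu(q_i)=\infty$ (for instance $\mu(z)=1/(z-q_1)$). A short direct computation shows that $\widetilde R(w):=R(\mu^{-1}(w))$ is again a rational function of the form $\widetilde q/\widetilde p$ with $\deg\widetilde p=k$, $\deg\widetilde q=k-1$, and real, simple, interlacing roots: the poles of $\widetilde R$ are $\mu(p_1),\ldots,\mu(p_k)$, and its zeros are the images $\mu(q_\ell)$ for $\ell\neq i$ together with $\mu(\infty)$. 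Since $q_i\in D_0$, the map $\mu$ carries $U$ onto a bounded Euclidean disk in $\bC$ whose boundary passes through $\mu(p_1)$ and $\mu(p_k)$; and because the open arc of $\bR P^1$ from $p_k$ through $\infty$ back to $p_1$ (the real trace of $\overline U$) contains no root of $p$, cyclic-order preservation under real M\"obius maps forces $\mu(p_1)$ and $\mu(p_k)$ to be consecutive roots of $\widetilde p$ on $\bR$. Hence $\mu(U)$ is precisely one of the disks $\widetilde D_{j_0}$ of Theorem~\ref{th:main} applied to $\widetilde R$, so $\widetilde R$ is univalent on $\mu(U)$; pulling back by $\mu$ yields univalence of $R$ on $U$, and therefore at most one solution of $q/p=re^{i\varphi}$ in $U$.

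The only real obstacle lies in the M\"obius reduction: verifying that $\widetilde R$ retains the ``real, simple, interlacing'' form Theorem~\ref{th:main} demands and that $\mu(U)$ is a ``between consecutive poles'' disk of $\widetilde R$ rather than, say, the outer disk $\widetilde D_0$ of the transformed configuration. Both verifications reduce to bookkeeping with the cyclic order of the $p_j$'s and $q_\ell$'s on $\bR P^1$ under $\mu$, exactly the type of argument already used at the end of the proof of Lemma~\ref{lem.1}.
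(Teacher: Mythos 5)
Your proposal is correct and is essentially the paper's own (unwritten) argument: the corollary is stated there as a direct consequence of Theorem~\ref{th:main}, namely that univalence of $R$ on each disk gives at most one preimage of the point $re^{i\varphi}$, with the exterior disk $\bC P^1\setminus\overline{D}_0$ handled by exactly the real M\"obius invariance the paper already invokes at the end of the proof of Lemma~\ref{lem.1} and in the proof of Theorem~\ref{th:main}. The one detail worth stating explicitly in your reduction is that identifying $\mu(U)$ with a ``diameter'' disk $\widetilde D_{j_0}$ is not pure cyclic-order bookkeeping: you should note that such disks are precisely those bounded by circles orthogonal to $\bR P^1$, and that real M\"obius maps, being conformal and preserving $\bR P^1$, preserve this orthogonality.
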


Considering the equation~\eqref{eq.1} as an equation on
$\mathbb{C}$, we obtain from Corollary~\ref{corollary.1} the
following statement.

\begin{corollary}\label{corollary.2}
All solutions of the equation~\eqref{eq.1} except at most one lie
in the closed disk $\overline{D}_0$.
\end{corollary}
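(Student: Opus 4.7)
The plan is to deduce Corollary~\ref{corollary.2} directly from Corollary~\ref{corollary.1} by an elementary set-theoretic observation together with a degree count for the solutions of \eqref{eq.1}.

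First I would recast \eqref{eq.1} as the polynomial equation $q(z)-re^{i\varphi}p(z)=0$. Since $\Deg p=k$ and $\Deg q=k-1$ and $re^{i\varphi}\neq 0$, this is a polynomial of degree exactly $k$ in $z$, so it has precisely $k$ solutions in $\bC$ counted with multiplicities. Moreover, $z=\infty$ is not a solution: in a neighborhood of $\infty$ one has $q/p\to 0$, whereas the right-hand side of \eqref{eq.1} has modulus $r>0$. Thus all $k$ solutions are honest finite points of $\bC$.

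Next I would invoke Corollary~\ref{corollary.1}, specifically the part asserting that the open disk $\bC P^{1}\setminus\overline{D}_{0}$ contains at most one root of \eqref{eq.1}. Since $\overline{D}_{0}$ and $\bC P^{1}\setminus\overline{D}_{0}$ together partition $\bC P^{1}$ (their intersection is empty, and their union is everything), every solution of \eqref{eq.1} lies in exactly one of these two pieces. By the previous paragraph, none of the solutions is the point $\infty$, so any solution not in $\overline{D}_{0}$ must lie in $\bC\setminus\overline{D}_{0}\subset\bC P^{1}\setminus\overline{D}_{0}$. Corollary~\ref{corollary.1} then guarantees at most one such solution, hence at least $k-1$ of the $k$ solutions lie in $\overline{D}_{0}$, which is exactly the assertion of Corollary~\ref{corollary.2}.

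There is no real obstacle here beyond making sure the bookkeeping with $\bC P^{1}$ versus $\bC$ is clean: the only point of $\bC P^{1}\setminus\overline{D}_{0}$ that is not in $\bC\setminus\overline{D}_{0}$ is $\infty$, and the degree count above rules it out as a solution, so the statement in Corollary~\ref{corollary.1} about $\bC P^{1}\setminus\overline{D}_{0}$ transfers without loss to the affine picture used in Corollary~\ref{corollary.2}.
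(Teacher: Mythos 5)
Your argument is correct and is essentially the paper's own: the paper deduces Corollary~\ref{corollary.2} from Corollary~\ref{corollary.1} by viewing \eqref{eq.1} as an equation on $\bC$, exactly as you do, with your degree count and the observation that $\infty$ is not a solution merely spelling out the routine bookkeeping the paper leaves implicit.
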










\section{Some open problems}³

 \noindent
 1. Describe the boundary of $\bigcap_{\al \in \bR}\Omega_\al$ and  find its algebraic equation.

 \noindent
 2.  Is Nehari's criterion (see introduction) satisfied  for disks $D_j(\al)$?

 \noindent
 3. Are there other classes of real rational functions for which the univalency of restriction guarantees the univalency in the disk? In particular, what happens to real rational functions with all real roots and poles?

 \noindent
4. Find an analog of Theorem~\ref{th:main} for pairs of real-rooted polynomials $(p,q)$ satisfying the assumptions of Dedieu's theorem, see \cite{De} in which case the straight segment connecting $p$ and $q$ consists of real-rooted polynomials.

 \noindent
 5. Conjecture. For any $R$ as in \eqref{eq:main} the loop $R(C_j^+)$ is a convex curve on $\bC$.

\end{document}